\newtheorem{remark}{Remark}
\newtheorem{theorem}{Theorem}[section]
\newtheorem{lemma}[theorem]{Lemma}
\newcommand{\ve}[1]{\mathbf{#1}}
\newcommand{\bm}[1]{\mathbf{#1}}
\newcommand{\norm}[1]{\left\|#1\right\|}
\newcommand{\ip}[2]{\langle #1, #2 \rangle}
\DeclareMathOperator{\Var}{Var}
\DeclareMathOperator{\PP}{\mathbb{P}}
\newcommand{\sigmin}[1]{\sigma_{\text{min}}(A)}
\newcommand{\sigminsquared}[1]{\sigma_{\text{min}}^2(A)}
\def\va{{\bm{a}}}
\def\vb{{\bm{b}}}
\def\vc{{\bm{c}}}
\def\vv{{\bm{v}}}
\def\vx{{\bm{x}}}
\def\vy{{\bm{y}}}
\def\mA{{\bm{A}}}
\def\mB{{\bm{B}}}
\def\mI{{\bm{I}}}
\def\mU{{\bm{U}}}
\def\mV{{\bm{V}}}
\def\mY{{\bm{Y}}}
\def\mZ{{\bm{Z}}}
\DeclareMathAlphabet\EuScript{U}{eus}{b}{n}
\SetMathAlphabet\EuScript{bold}{U}{eus}{b}{n}
\newcommand{\E}{\mathbb{E}} 
\newcommand{\ev}[2][]{\mathbb{E}_{#1}\left[#2\right]} 
\newcommand{\prob}[2][]{\mathbb{P}_{#1}\left[#2\right]} 
\mathchardef\mhyphen="2D
\author{Toby Anderson, Max Collins, Jamie Haddock, Jackie Lok, Elizaveta Rebrova}
\title[Beyond Expectation: Concentration Inequalities for Randomized Iterative Methods] 
{Beyond Expectation: Concentration Inequalities for Randomized Iterative Methods}
\begin{document}

\begin{abstract}
    Stochastic iterative methods are useful in a variety of large-scale numerical linear algebraic, machine learning, and statistical problems, in part due to their low-memory footprint. They are frequently used in a variety of applications, and thus it is imperative to have a thorough theoretical understanding of their behavior.
    Most theoretical convergence results for stochastic iterative methods provide bounds on the expected error of the iterates, and yield a type of average case analysis. However, understanding the behavior of these methods in the near-worst-case is desirable.  For stochastic methods, this motivates providing bounds on the variance and concentration of their error, which can be used to generate confidence intervals around the bounds on their expected error.
    
    Here, we provide upper bounds for the concentration and variance of the error of a general class of linear stochastic iterative methods, including the randomized Kaczmarz method and the randomized Gauss--Seidel method, and a more general class of nonlinear stochastic iterative methods, including the randomized Kaczmarz method for systems of linear inequalities.
\end{abstract}

\maketitle


\section{Introduction}\label{sec:intro}
Stochastic or randomized iterative methods have become increasingly popular approaches for a variety of large-scale data problems as these methods typically have low-memory footprint and are accompanied by attractive theoretical guarantees~\cite{murray2023randomized}.  Indeed, the scale of modern problems often make application of direct or non-iterative methods challenging or infeasible.  Examples of randomized iterative methods that have found popularity in recent years include the stochastic gradient descent method~\cite{robbins1951stochastic} and the randomized Kaczmarz method~\cite{strohmer2009randomized}.  

Theoretical guarantees that accompany randomized iterative methods tend to focus upon bounding the expected error of the sequence of iterates~\cite{gower2015randomized, moulines2011non,ma2015convergence}.  For example, the seminal work of~\cite{strohmer2009randomized} proved that when applied to a consistent linear system $\mA\ve{x} = \ve{b}$ with unique solution $\ve{x}^*$, the randomized Kaczmarz method (with a specific sampling distribution, see Section~\ref{sec:RK_and_RGS} below for details) converges at least linearly in expectation with the guarantee
\begin{equation}
    \mathbb{E}\|\ve{e}_k\|^2 \le \left(1 - \frac{\sigma_{\mathrm{min}}^2(\mA)}{\|\mA\|_F^2}\right)^k \|\ve{e}_0\|^2. \label{eq:RKrate}
\end{equation} 
Here, $\ve{e}_k := \ve{x}_k - \ve{x}^*$ denotes the error vector between the $k$th iterate and the solution, and $\sigma_{\mathrm{min}}(\mA)$ is the minimum singular value of the matrix $\mA$.
However, understanding the average case behavior of a randomized method can be an insufficient measure of how well it performs.  Indeed, much effort is typically put into understanding the worst-case behavior of even simple algorithms~\cite{kleinberg2006algorithm}.  In the context of randomized methods, one may interpolate between the average and worst cases by proving bounds on the \emph{concentration} of the error. These results provide upper bounds on the probability that the error deviates significantly above its mean (or an upper bound for its mean).

Besides yielding a better understanding of the behavior of randomized methods, concentration inequalities bounding the error of randomized methods can be used as an algorithmic tool to detect data inconsistency, outliers, and poor objective landscape geometry.  For instance, if the residual error of a consistent linear system should be less than $\epsilon$ after $k_\epsilon$ iterations of a randomized method with high probability, and one encounters a system that has residual error well above $\epsilon$ after $k_\epsilon$ iterations, this may suggest that the system is inconsistent.  Furthermore, the magnitude of the entries of the residual may yield information about the form of the system inconsistency (e.g., the positions of corruptions)~\cite{HN18Corrupted}.

Our work considers two classes of iterative methods. In Section~\ref{sec:linear}, we examine the concentration and variance of the error of randomized iterative methods whose errors or residual errors in sequential iterations obey a linear relationship, $\ve{e}_j = \mY_j \ve{e}_{j-1}$, where $\mY_j$ is independently sampled in the $j$th iteration from a family of square matrices. In Subsection~\ref{sec:RK_and_RGS}, we show that two families of methods for solving systems of linear equations $\mA \vx = \vb$ satisfy this recursive error relation. The \emph{randomized Kaczmarz (RK)} methods have errors that satisfy
\[
    \ve{e}_j = \mY_{j}\ve{e}_{j-1},\quad \text{with } \ve{e}_j := \vx_j - \vx^* \text{ and } \mY_j := \mI - \frac{\ve{a}_{i_j} \ve{a}_{i_j}^\top}{\|\ve{a}_{i_j}\|^2},
\]
where $\ve{a}_{i_j}$ is the $i_j$th row of $\mA$. It is known that the RK errors satisfy~\eqref{eq:RKrate}~\cite{strohmer2009randomized}.  The \emph{randomized Gauss--Seidel (RGS)} methods have errors that satisfy 
\[
    \ve{e}_j = \mY_{j}\ve{e}_{j-1}, \quad \text{with } \ve{e}_j := \mA \ve{x}_j - \mA \ve{x}^* \text{ and } \mY_j := \mI - \frac{\mA_{i_j} \mA_{i_j}^\top}{\|\mA_{i_j}\|^2},
\] 
where $\mA_{i_j}$ is the $i_j$th column of $\mA$.  It is known that the RGS errors satisfy~\eqref{eq:RKrate}~\cite{LL10:Randomized-Methods}.

In Section~\ref{sec:nonlinear}, we consider a broader class of potentially nonlinear updating methods in which the iterates are given by $\ve{x}_j = f_{i_j}(\ve{x}_{j-1})$, where $f_{i_j}$ is an independent and identically distributed (i.i.d.) sample from a fixed set $F = \{f_1, f_2, \cdots, f_m\}$ of updating functions in the $j$th iteration.  In Subsection~\ref{subsec:RKLI}, we apply these results to the randomized Kaczmarz method for linear inequalities~\cite{LL10:Randomized-Methods}.

\subsection{Simple Markov inequality based bound} 
Expectation-based guarantees like~\eqref{eq:RKrate} are well-established for many randomized iterative methods.  However, comparatively little is known about how much randomized iterative methods may deviate from their mean behavior, or how they \emph{concentrate} around their mean. 
A natural first approach to deriving bounds on the concentration of the error of randomized methods is to apply Markov's inequality to extend a convergence bound in expectation to one in probability.  Doing so yields the following general lemma:
\begin{lemma} \label{lem:markov}
    Consider a stochastic process $\{ \ve{x}_k : k \in \mathbb{N} \}$ approximating an element of nonempty convex $S \subset \mathbb{R}^n$, where $\ve{x}_k = f_{i_k}(\ve{x}_{k-1})$ and $f_{i_k}$ is independently and randomly selected from a set $F = \{f_1, f_2, \cdots, f_m\}$ at each time $k$ according to a fixed distribution $\mathcal{D}$. Suppose that
    $$
        \mathbb{E}[d(\ve{x}_k,S)^2] \leq r^k d(\ve{x}_0,S)^2 \quad \text{for some } r \in (0, 1),
    $$
    and $d(\ve{x},S) := \inf_{\ve{s} \in S} \|\ve{x} - \ve{s}\|$ is defined with respect to a vector norm $\|\cdot\|$. Then, for any $t > 0$, it follows that
    \begin{equation}
        \PP\left( d(\ve{x}_k,S)^2 - \E[d(\ve{x}_k,S)^2] \geq t \right) \leq \frac{r^kd(\ve{x}_0,S)^2}{t}. \label{eq:markov}
    \end{equation}
\end{lemma}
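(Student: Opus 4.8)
The plan is to apply Markov's inequality directly to the nonnegative random variable $Z := d(\ve{x}_k, S)^2$. First I would observe that the event whose probability we wish to bound, namely $\{Z - \mathbb{E}[Z] \ge t\}$, is literally the tail event $\{Z \ge \mathbb{E}[Z] + t\}$ for $Z$ at the threshold $a := \mathbb{E}[Z] + t$, and that $a > 0$ since $t > 0$ and $Z \ge 0$ forces $\mathbb{E}[Z] \ge 0$. This reframing is the only conceptual move: it converts a deviation statement into a plain upper-tail statement to which Markov's inequality applies.

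Next, Markov's inequality gives $\PP(Z \ge a) \le \mathbb{E}[Z]/a$. Substituting $a = \mathbb{E}[Z] + t$ and then bounding the denominator below by $t$ (legitimate because $\mathbb{E}[Z] \ge 0$) yields $\PP(Z - \mathbb{E}[Z] \ge t) \le \mathbb{E}[Z]/(\mathbb{E}[Z]+t) \le \mathbb{E}[Z]/t$. Finally I would invoke the standing hypothesis $\mathbb{E}[Z] = \mathbb{E}[d(\ve{x}_k,S)^2] \le r^k d(\ve{x}_0,S)^2$ to replace the numerator, which produces the claimed bound~\eqref{eq:markov}.

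The main item requiring care --- rather than a genuine obstacle --- is the nonnegativity of $Z$, which simultaneously licenses the use of Markov's inequality and supplies the inequality $\mathbb{E}[Z] + t \ge t$ used to discard the $\mathbb{E}[Z]$ term in the denominator. It is worth noting that none of the structural assumptions of the lemma --- the independence of the $f_{i_k}$, the i.i.d.\ sampling from $\mathcal{D}$, or the recursive form $\ve{x}_k = f_{i_k}(\ve{x}_{k-1})$ --- actually enters the argument; the conclusion rests solely on the single expectation bound, so the same one-line proof applies verbatim to every linear and nonlinear method considered later whenever a linear-rate-in-expectation guarantee is available. This also foreshadows the weakness of~\eqref{eq:markov}: the resulting tail decays only like $1/t$ rather than exponentially, which is precisely what motivates the sharper variance and concentration estimates developed in the remainder of the paper.
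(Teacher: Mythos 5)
Your proof is correct and is exactly the argument the paper intends: the lemma is stated as an immediate consequence of Markov's inequality (the paper gives no separate proof), and your application of Markov at the threshold $\mathbb{E}[d(\ve{x}_k,S)^2] + t$, followed by bounding the denominator below by $t$ and the numerator by $r^k d(\ve{x}_0,S)^2$, is the standard one-line derivation. Your observation that none of the structural assumptions on the process are actually used is also accurate and consistent with how the paper deploys this bound.
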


We compare our main results in Subsection~\ref{subsec:contributions} to this elementary result in our numerical experiments in Subsection~\ref{sec:bound_comparison}.  We have found that this bound is surprisingly difficult to outperform for small values of $t$.  We note that Lemma~\ref{lem:markov}, unlike those presented in Subsection~\ref{subsec:contributions}, provides a one-sided bound, and thus is weaker. However, the case where the error deviates above its mean is likely of most interest practically.

\subsection{Contributions}\label{subsec:contributions}
In this paper, we are interested in understanding the behavior of randomized iterative methods \emph{beyond the average case}.  Results for randomized methods often consider bounding the error \emph{in expectation}, but less often provide bounds for how far the error of these methods can deviate above their average case bound.  We provide bounds on the variance and concentration of commonly studied methods in the area of randomized numerical linear algebra and optimization, and additionally consider some high-probability bounds for the error.

Our first main result bounds the variance and concentration of the squared norm of the error of randomized methods whose error obeys a linear recurrence relation.  
\begin{theorem} \label{thm:main_linear}
    Let $\ve{e}_k = \ve{Y}_k\ve{e}_{k-1}$, where $\mY_k \sim \mY$ is sampled i.i.d., and define $\mu := \|\E[(\ve{Y}^\top\ve{Y})^{\otimes2}]\|$ and $\eta := \lambda_{\mathrm{min}}(\E [\mY^\top\mY])$.
    Then for all $k$,
    \begin{equation}
        \Var(\|\ve{e}_k\|^{2}) \leq (\mu^k - \eta^k) \cdot \|\ve{e}_0\|^{4}.
    \label{eq:variance_bound}
    \end{equation}
\end{theorem}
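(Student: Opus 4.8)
The plan is to write $\Var(\norm{\ve e_k}^2)=\E[\norm{\ve e_k}^4]-\big(\E[\norm{\ve e_k}^2]\big)^2$ and to control the two terms separately by exploiting the linear recursion: the fourth moment via a Kronecker/vectorization argument that produces the $\mu^k$ factor, and the squared mean via the smallest eigenvalue of $\E[\mY^\top\mY]$ that produces the $\eta$ factor.

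First I would handle the fourth moment. The key observation is that $\ve e_k\otimes\ve e_k$ inherits a linear recursion: since $\ve e_k=\mY_k\ve e_{k-1}$, the Kronecker mixed-product identity gives $\ve e_k\otimes\ve e_k=(\mY_k\otimes\mY_k)(\ve e_{k-1}\otimes\ve e_{k-1})$, and $\norm{\ve e_k}^4=\norm{\ve e_k\otimes\ve e_k}^2$. Writing $\ve w_{k-1}:=\ve e_{k-1}\otimes\ve e_{k-1}$ and conditioning on the first $k-1$ steps, independence of $\mY_k$ yields
\[
\E\!\left[\norm{\ve e_k}^4\mid \ve e_{k-1}\right]=\ve w_{k-1}^\top\,\E\!\big[(\mY_k\otimes\mY_k)^\top(\mY_k\otimes\mY_k)\big]\,\ve w_{k-1}=\ve w_{k-1}^\top\,\E\big[(\mY^\top\mY)^{\otimes2}\big]\,\ve w_{k-1},
\]
using $(\mY\otimes\mY)^\top(\mY\otimes\mY)=(\mY^\top\mY)\otimes(\mY^\top\mY)$. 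Because $\E[(\mY^\top\mY)^{\otimes2}]$ is symmetric positive semidefinite, its operator norm equals its largest eigenvalue $\mu$, so the quadratic form is at most $\mu\norm{\ve w_{k-1}}^2=\mu\norm{\ve e_{k-1}}^4$. Taking expectations and iterating gives $\E[\norm{\ve e_k}^4]\le\mu^k\norm{\ve e_0}^4$.

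Next I would lower bound the mean. Conditioning again and using $\E[\mY^\top\mY]\succeq\eta\,\mI$ gives
\[
\E\!\left[\norm{\ve e_k}^2\mid\ve e_{k-1}\right]=\ve e_{k-1}^\top\,\E[\mY^\top\mY]\,\ve e_{k-1}\ge\eta\,\norm{\ve e_{k-1}}^2,
\]
so iterating yields $\E[\norm{\ve e_k}^2]\ge\eta^k\norm{\ve e_0}^2$. Substituting the two bounds into $\Var(\norm{\ve e_k}^2)=\E[\norm{\ve e_k}^4]-\big(\E[\norm{\ve e_k}^2]\big)^2$ is then intended to produce the claimed inequality~\eqref{eq:variance_bound}, modulo the precise power of $\eta$ retained in the subtracted term.

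I expect that last point to be the main obstacle. The fourth-moment step is robust and cleanly delivers $\mu^k\norm{\ve e_0}^4$. The difficulty is that the contraction argument controls $\E[\norm{\ve e_k}^2]$, whereas the variance subtracts its \emph{square}: squaring $\E[\norm{\ve e_k}^2]\ge\eta^k\norm{\ve e_0}^2$ produces $\eta^{2k}\norm{\ve e_0}^4$, which for a genuinely contractive method ($\eta<1$) is strictly smaller than the $\eta^k\norm{\ve e_0}^4$ appearing in~\eqref{eq:variance_bound}. Matching the stated $\eta^k$ power therefore seems to require more than this elementary bookkeeping---either the regime $\mu\ge\eta$ (so the right-hand side is at least nonnegative) combined with a sharper handle on the squared mean, or an additional structural assumption on $\mY$. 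Pinning down exactly which ingredient licenses the $\eta^k$ power is where I would concentrate my effort, and I would stress-test the argument on a balanced projection instance such as $\mA=\mI_2$, where $\mu=\eta=\tfrac12$ and the one-step variance can be computed in closed form, to make sure the retained power of $\eta$ is the one the method actually achieves.
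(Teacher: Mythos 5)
Your two moment bounds follow exactly the paper's route: the paper first proves a general moment bound (Theorem~\ref{thm:moment_bound}) via the same kernel-trick identity $\|\ve{e}_k\|^{2p} = \ip{(\mY_k^\top\mY_k\ve{e}_{k-1})^{\otimes p}}{\ve{e}_{k-1}^{\otimes p}}$, conditioning on the past, and the variational characterization of the extreme eigenvalues of the positive semidefinite matrix $\E[(\mY^\top\mY)^{\otimes p}]$, and then applies it with $p=2$ to get $\E\|\ve{e}_k\|^4 \le \mu^k \|\ve{e}_0\|^4$ and with $p=1$ to get $\E\|\ve{e}_k\|^2 \ge \eta^k \|\ve{e}_0\|^2$ --- precisely your two ingredients (your mixed-product step $(\mY\otimes\mY)^\top(\mY\otimes\mY) = (\mY^\top\mY)^{\otimes 2}$ is the same computation).

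The obstacle you isolated is real, and the paper does not resolve it: its proof ends by asserting that ``combining these bounds'' yields $\Var(\|\ve{e}_k\|^2) \le (\mu^k - \eta^k)\|\ve{e}_0\|^4$, but squaring the nonnegative mean bound gives only $(\E\|\ve{e}_k\|^2)^2 \ge \eta^{2k}\|\ve{e}_0\|^4$, hence $\Var(\|\ve{e}_k\|^2) \le (\mu^k - \eta^{2k})\|\ve{e}_0\|^4$, which is strictly weaker than the stated bound whenever $\eta < 1$. Your proposed stress test settles the matter: for RK on $\mA = \mI_2$ with rows sampled with probability $1/2$ each, $\mY$ is $\mathrm{diag}(0,1)$ or $\mathrm{diag}(1,0)$, so $\E[(\mY^\top\mY)^{\otimes 2}] = \mathrm{diag}(1/2,0,0,1/2)$ and $\mu = \eta = 1/2$; taking $\ve{e}_0 = (1,0)^\top$, the quantity $\|\ve{e}_1\|^2$ takes the values $0$ and $1$ with probability $1/2$ each, so $\Var(\|\ve{e}_1\|^2) = 1/4$ while $(\mu - \eta)\|\ve{e}_0\|^4 = 0$. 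Thus \eqref{eq:variance_bound} as stated is false at $k=1$, and no further idea can rescue the $\eta^k$ power; the correct statement provable by this method is $\Var(\|\ve{e}_k\|^2) \le (\mu^k - \eta^{2k})\|\ve{e}_0\|^4$, which is tight in this example ($\mu - \eta^2 = 1/4$), with the corresponding replacement of the second term in \eqref{eq:mu_eta_bound} and \eqref{eq:RK_variance} by $\left(1 - \sigma_{\max}^2(\mA)/\|\mA\|_F^2\right)^{2k}$. In short: your derivation matches the paper's, and your refusal to conclude the $\eta^k$ power is the mathematically correct position --- the discrepancy is a slip in the paper's final line, not a gap in your argument.
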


We prove this result in Subsection~\ref{subsec:linear_general}. It automatically extends to a concentration result of type \eqref{eq:markov} via Chebyshev's inequality, and provides confidence intervals for the trajectories of randomized iterative methods: see Remark~\ref{rmk:linear_concentration_confint}. We note that this confidence interval is two-sided (see Figure~\ref{fig:motivating_concentration}), but the upper bound on the error is most interesting from an algorithmic perspective.  We showcase the improvement of the refined variance analysis on some standard methods, including RK and RGS, in Subsection~\ref{sec:RK_and_RGS}.  For both of these methods, one can bound 
\begin{equation}
    \mu^k - \eta^k \le \left( 1 - \frac{\sigma^2_{\mathrm{min}}(\mathbf{A})}{\norm{\mathbf{A}}_F^2} \right)^{k} - \left( 1 - \frac{\sigma^2_{\mathrm{max}}(\mathbf{A})}{\norm{\mathbf{A}}_F^2} \right)^{k}. \label{eq:mu_eta_bound}
\end{equation}
As a motivating example, we visualize the empirical concentration of the error of the RK method applied to a consistent system of equations defined by a well-conditioned, randomly-generated matrix $\mA \in \mathbb{R}^{1000 \times 20}$ with linearly decreasing singular values over $500$ trials in Figure~\ref{fig:motivating_concentration}, along with the 75\% and 95\% confidence intervals for the error derived by combining Chebyshev's inequality with Theorem~\ref{thm:main_linear} and~\eqref{eq:mu_eta_bound}.  We also plot the empirical mean of the error across the independent trials and the bound on the mean error~\eqref{eq:RKrate}.  More  numerical experiments are provided in Subsection~\ref{sec:empirical_conc}.

\begin{figure}
    \centering
    \includegraphics[width=0.6\textwidth]{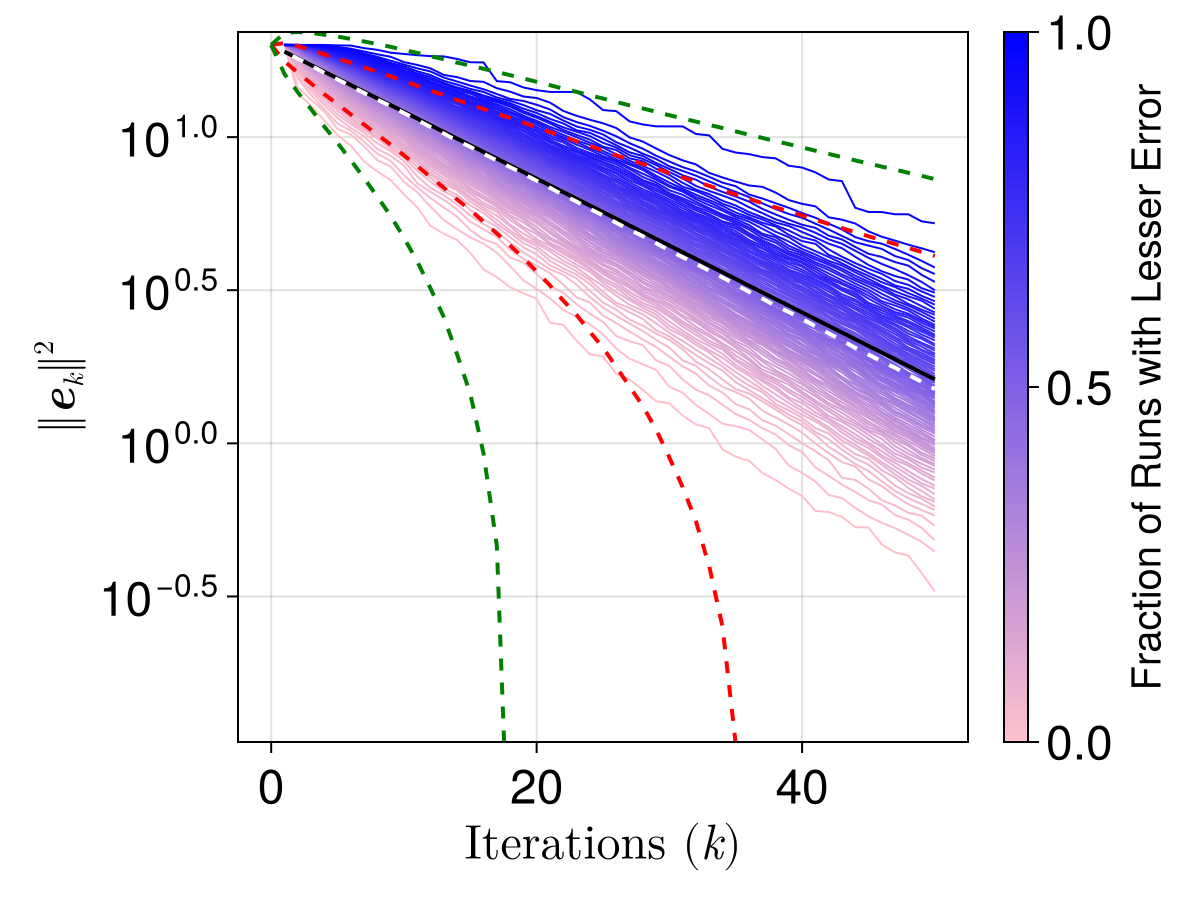}
    \caption{Visualization of errors of 500 independent trials of RK. Empirical mean error (white dashed line), bound~\eqref{eq:RKrate} (black solid line), and the 75\% (red dashed lines) and 95\% (green dashed lines) confidence intervals for the error derived by combining Chebyshev's inequality with Theorem~\ref{thm:main_linear} and~\eqref{eq:mu_eta_bound} are plotted. 
    }\label{fig:motivating_concentration}
\end{figure}

Next, we prove high-probability results for the error of randomized methods that satisfy a linear recurrence relation. Unlike what one can get by simply applying Markov's inequality to the bound in expectation, these results crucially hold for the \emph{whole random trajectory}, rather than for any fixed iteration.

\begin{theorem} \label{thm:highprobability_main}
    Let $\ve{e}_k = \ve{Y}_k\ve{e}_{k-1}$, where each $\mY_k$ is independently sampled from a family of $n \times n$ matrices such that $\sup_k \|\mathbb{E}[\ve{Y}_k^\top \ve{Y}_k]\| \le \rho$. Then, the following hold:
    \begin{itemize}
        \item[(a)] For any $\epsilon \in (0, 1]$,
        \begin{equation}
            \PP\left( \forall k \geq 0 : \norm{\mathbf{e}_k}^2 \leq \epsilon^{-1} \rho^k \norm{\mathbf{e}_0}^2 \right) \geq 1 - \epsilon. \label{eq:tail_markov}
        \end{equation}
        
        \item[(b)] If moreover, $\sup_k\| \mY_k^\top \mY_k \| \leq \alpha$ almost surely for some $\alpha \ge 1$ (e.g., if $\mY_k^\top \mY_k$ are contraction operators, then $\alpha = 1$), then, for any $k \geq 0$,
        \begin{equation}
            \PP\left[ \sup_{0 \leq t \leq k} \norm{\mathbf{e}_t}^2 \leq \exp\left( -k \cdot (1 - \rho) + \alpha \cdot \sqrt{2k \log(\epsilon^{-1})} \right) \norm{\mathbf{e}_0}^2\right] \ge 1 - \epsilon. \label{eq:high_probability}
        \end{equation}
    \end{itemize}
\end{theorem}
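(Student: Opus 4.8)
The plan is to treat both parts as consequences of a single observation. Writing $\mathcal{F}_k := \sigma(\ve{Y}_1,\dots,\ve{Y}_k)$ for the natural filtration and using $\norm{\ve{e}_k}^2 = \ve{e}_{k-1}^\top \ve{Y}_k^\top \ve{Y}_k \ve{e}_{k-1}$, the hypothesis $\sup_k \norm{\E[\ve{Y}_k^\top\ve{Y}_k]}\le\rho$ gives the one-step contraction $\E[\norm{\ve{e}_k}^2\mid\mathcal{F}_{k-1}] = \ve{e}_{k-1}^\top \E[\ve{Y}_k^\top\ve{Y}_k]\ve{e}_{k-1} \le \rho\norm{\ve{e}_{k-1}}^2$. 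Consequently the rescaled process $V_k := \rho^{-k}\norm{\ve{e}_k}^2$ is a nonnegative supermartingale, since $\E[V_k\mid\mathcal{F}_{k-1}] \le \rho^{-k}\cdot\rho\,\norm{\ve{e}_{k-1}}^2 = V_{k-1}$. This one supermartingale, combined with maximal inequalities, drives everything.

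For part (a) I would apply Ville's maximal inequality for nonnegative supermartingales, $\PP(\sup_{k\ge 0}V_k\ge\lambda)\le \E[V_0]/\lambda$, which controls the entire infinite trajectory at once rather than a single $k$. Since $V_0 = \norm{\ve{e}_0}^2$ is deterministic, choosing $\lambda = \epsilon^{-1}\norm{\ve{e}_0}^2$ yields $\PP(\sup_{k\ge 0}\rho^{-k}\norm{\ve{e}_k}^2 \ge \epsilon^{-1}\norm{\ve{e}_0}^2)\le\epsilon$, and passing to the complementary event is exactly~\eqref{eq:tail_markov}. It is this maximal step, rather than a fixed-$k$ Markov bound, that makes the estimate hold simultaneously for all $k$.

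For part (b), the almost sure bound $\sup_k\norm{\ve{Y}_k^\top\ve{Y}_k}\le\alpha$ lets me pass to a sharper, sub-Gaussian trajectory-wide estimate. The natural object is the Rayleigh quotient $Z_k := \norm{\ve{e}_k}^2/\norm{\ve{e}_{k-1}}^2 = \ve{e}_{k-1}^\top\ve{Y}_k^\top\ve{Y}_k\ve{e}_{k-1}/\norm{\ve{e}_{k-1}}^2$, which satisfies $0\le Z_k\le\alpha$ almost surely and $\E[Z_k\mid\mathcal{F}_{k-1}]\le\rho$. Telescoping in logarithms and linearizing via $\log x\le x-1$ gives, for every $t$, $\log\norm{\ve{e}_t}^2 - \log\norm{\ve{e}_0}^2 = \sum_{j=1}^t\log Z_j \le \sum_{j=1}^t(Z_j-1)$. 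Centering each term by its conditional mean $\bar{Z}_j := \E[Z_j\mid\mathcal{F}_{j-1}]\le\rho$ and setting $N_t := \sum_{j=1}^t(Z_j-\bar{Z}_j)$, I obtain $\log\norm{\ve{e}_t}^2-\log\norm{\ve{e}_0}^2 \le N_t - t(1-\rho)$, where $N_t$ is a martingale whose increments satisfy $|Z_j-\bar{Z}_j|\le\alpha$.

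It then remains to control $N_t$ uniformly over $0\le t\le k$, which I would do with the maximal form of the Azuma--Hoeffding inequality: since the increments lie in an interval of width at most $2\alpha$, Hoeffding's lemma makes $\exp(\lambda N_t - \lambda^2\alpha^2 t/2)$ a supermartingale for each $\lambda>0$, and Ville's (Doob's) inequality, after optimizing $\lambda$, gives $\PP(\max_{0\le t\le k}N_t\ge \alpha\sqrt{2k\log(\epsilon^{-1})})\le\epsilon$. On the complementary event, $\log\norm{\ve{e}_t}^2-\log\norm{\ve{e}_0}^2 \le -t(1-\rho)+\alpha\sqrt{2k\log(\epsilon^{-1})}$ for all $t\le k$ simultaneously; exponentiating controls every $\norm{\ve{e}_t}^2$, and hence $\sup_{0\le t\le k}\norm{\ve{e}_t}^2$, while the terminal instance $t=k$ produces the drift factor $\exp(-k(1-\rho)+\alpha\sqrt{2k\log(\epsilon^{-1})})$ of~\eqref{eq:high_probability}. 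The main obstacle, and the reason the almost sure bound is essential, is precisely this upgrade from a fixed-iteration tail to a whole-trajectory one: $\log Z_j$ is unbounded below, so Azuma cannot be applied to the logarithms directly, and the linearization $\log x\le x-1$ is what transfers the problem to the bounded, sub-Gaussian increments $Z_j-\bar{Z}_j$ on which the maximal inequality operates.
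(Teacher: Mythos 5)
Your proof follows essentially the same route as the paper's: for part (a), the same rescaled nonnegative supermartingale combined with Ville's/Doob's maximal inequality, and for part (b), the same telescoping in logarithms with $\log x \le x - 1$, reduction to a bounded-increment (super)martingale with drift $1-\rho$, and a maximal Azuma--Hoeffding bound with the same choice $\lambda = \alpha\sqrt{2k\log(\epsilon^{-1})}$. The only cosmetic difference is that you center the increments to form a true martingale and re-derive the maximal inequality from Hoeffding's lemma plus Ville's inequality, whereas the paper directly builds the drift-compensated supermartingale $\tilde{Z}_k$ and cites the corresponding Azuma--Hoeffding result for supermartingales with bounded differences.
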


This theorem is proved in Subsection~\ref{subsec:highprobability}.  We note that the bound~\eqref{eq:tail_markov} is generally better for larger $k$ or when $\epsilon$ not too small. However, \eqref{eq:high_probability} can provide a stronger bound if one is interested in the first few iterations or in tight probabilistic bounds: e.g., with an exponentially small tolerance for the failure probability $\epsilon = e^{-O(n)}$ with respect to the size of the data, \eqref{eq:high_probability} provides a stronger bound for the first $O(n)$ iterations of the algorithm. 

Theorem~\ref{thm:highprobability_main} applies to the errors of the standard RK and RGS methods with parameters $\rho = 1 - \sigma_{\min}^2(\mA) / \|\mA\|_F^2$ and $\alpha = 1$.

\smallskip

Finally, we show that the approach is not limited to methods whose error obeys a linear recurrence relation and provide an upper bound on the variance of error of nonlinear randomized methods as follows.

\begin{theorem} \label{thm:nonlinear_main}
    Consider a stochastic process $\{\ve{x}_k:k\in \mathbb{N}\}$ approximating an element of a nonempty convex set $S \subset \mathbb{R}^n$, where $\ve{x}_k = f_{i_k}(\ve{x}_{k-1})$ and $f_{i_k}$ is independently selected from a set $F = \{f_1, f_2, \cdots, f_m\}$ at each time $k$ according to a fixed distribution $\mathcal{D}$. Let $d(\ve{x},S) := \inf_{\ve{s} \in S} \|\ve{x} - \ve{s}\|$ denote the distance to $S$ with respect to a vector norm $\|\cdot\|$. Suppose that
    \[
        \mathbb{E}[d(\ve{x}_k,S)^2] \leq r^k d(\ve{x}_0,S)^2 \quad \text{for some } r \in (0, 1),
    \]
    and that there exists some $D$ such that $\sup_{k \in \mathbb{N}} d(\ve{x}_k,S) \leq D$. Then, it follows that
    \begin{equation}
        \Var(d(\ve{x}_k, S)^2) \leq D^2r^k d(\ve{x}_0, S)^2. \label{eq:nonlinear_variance}
    \end{equation}
\end{theorem}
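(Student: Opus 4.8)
The plan is to exploit the almost-sure uniform bound $d(\ve{x}_k, S) \le D$ in order to reduce the bound on the variance to a bound on the \emph{first} moment of the squared distance, where the assumed linear decay in expectation can be applied directly. Writing $Z_k := d(\ve{x}_k, S)^2$ for brevity, the first step is the elementary observation that the variance never exceeds the second moment: since $\Var(Z_k) = \mathbb{E}[Z_k^2] - (\mathbb{E}[Z_k])^2$ and the subtracted term is nonnegative, we have $\Var(Z_k) \le \mathbb{E}[Z_k^2]$. This discards the (unknown) mean and leaves us needing only an upper bound on $\mathbb{E}[Z_k^2] = \mathbb{E}[d(\ve{x}_k, S)^4]$.

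The crux of the argument is the factorization $Z_k^2 = d(\ve{x}_k, S)^4 = d(\ve{x}_k, S)^2 \cdot d(\ve{x}_k, S)^2$, in which one of the two quadratic factors is controlled by the uniform bound. Since $d(\ve{x}_k, S) \le D$ holds for every $k$ by hypothesis, we have $d(\ve{x}_k, S)^2 \le D^2$ pointwise, and hence $Z_k^2 \le D^2 \, d(\ve{x}_k, S)^2 = D^2 Z_k$ almost surely. Taking expectations and invoking the assumed contraction $\mathbb{E}[d(\ve{x}_k, S)^2] \le r^k d(\ve{x}_0, S)^2$ then yields $\mathbb{E}[Z_k^2] \le D^2 \, \mathbb{E}[Z_k] \le D^2 r^k d(\ve{x}_0, S)^2$. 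Chaining this with the variance-versus-second-moment inequality from the first step gives exactly the claimed bound~\eqref{eq:nonlinear_variance}.

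I do not anticipate a genuine obstacle here: the entire content is the single pointwise inequality $d^4 \le D^2 d^2$ enabled by the uniform bound, and the boundedness $Z_k \le D^2$ also guarantees that all the moments invoked are finite, so no integrability issues arise. It is worth noting what the proof does \emph{not} use: neither the convexity of $S$ nor the specific i.i.d.\ update structure $\ve{x}_k = f_{i_k}(\ve{x}_{k-1})$ enters the variance computation itself. These hypotheses are needed only to secure the two quantitative inputs we rely on --- the in-expectation decay rate $r$ and the uniform diameter bound $D$ --- and once those are in hand the conclusion follows for any process satisfying them.
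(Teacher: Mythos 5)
Your proposal is correct and is essentially identical to the paper's own proof: both bound $\Var(d(\ve{x}_k,S)^2)$ by the second moment $\E[d(\ve{x}_k,S)^4]$, use the pointwise bound $d(\ve{x}_k,S)^2 \le D^2$ on one factor, and then apply the assumed expectation decay $\E[d(\ve{x}_k,S)^2] \le r^k d(\ve{x}_0,S)^2$. Your closing remark that convexity and the i.i.d.\ update structure are not used in the variance computation itself is a fair observation, but it does not alter the argument, which matches the paper step for step.
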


This result provides confidence intervals through an application of Chebyshev's inequality.  One can apply this result for the randomized Kaczmarz method for solving a system of linear inequalities~\cite{LL10:Randomized-Methods} by setting $D = d(\ve{x}_0,S)$ and $r = 1 - 1 / (L^2\|\mA\|_F^2)$, where $L$ is the Hoffman constant for the system~\cite{hoffman1952}.  We show this in Section~\ref{sec:nonlinear} and apply these results the the randomized Kaczmarz method for linear feasibility in Subsection~\ref{subsec:RKLI}.

\subsection{Related work}

In the literature on stochastic gradient descent (SGD) and related methods, there has been some work proving bounds on the concentration of the error of SGD or high-probability convergence results for SGD and variants~\cite{nemirovski2009robust,juditsky2011solving,ghadimi2012optimal}.  Most of these results apply to variants of stochastic gradient descent which average iterates to reduce the effect of noise and variance~\cite{feldman2019high, harvey2019tight, mou2020linear,lou2022beyond}. 
These results are challenging to apply or generalize to the methods we consider in this paper due to their assumptions on the step size schedule and application to averaging methods.  In~\cite{gorbunov2023high}, the authors mention the application of Markov's inequality to prove high-probability convergence.  The latter paper also includes a nice survey of high-probability convergence results for a variety of SGD variants with differing assumptions on the problem to be solved, as well as new results for clipped variants of SGD on composite and distributed problems. In \cite{derezinski2025fine}, the authors use Markov's bound for the complexity analysis of a block Kaczmarz-type algorithm. 

In~\cite{fagnani2008randomized}, the authors utilize Azuma's inequality, a concentration inequality for sequences of martingale random variables with bounded sequential differences, to bound the concentration and variance of the errors of a variety of consensus protocols.  These protocols are well-studied in the discrete dynamical systems community and have applications in a variety of areas including distributed computing, opinion dynamics modelling, gene network models, and control theory.  It has been recently noted that these discrete updates can be viewed as iterations of common iterative methods in numerical linear algebra applied to linear systems encoding the consensus problem~\cite{loizou2019revisiting, HJY21}. 

Some results for stronger (e.g., almost sure) convergence guarantees for RK in the streaming setting with independent measurement vectors are derived in~\cite{CP12:Almost-Sure-Convergence, lin2015learning}.

\smallskip

The following two approaches are most related to ours and we describe them in more detail:

\emph{1. Matrix concentration bounds.}
Many iterative methods of interest in randomized numerical linear algebra (e.g., variants of the randomized Kaczmarz and randomized Gauss--Seidel methods) can be interpreted as a product of projection matrices applied recursively to the iterates.  These projection matrices are sampled from a fixed set of update matrices, usually defined by the rows or columns of the matrix defining a linear system or regression problem. For this reason, results providing tight concentration results for products of random matrices could be used to provide concentration bounds for the error of these randomized iterative methods.  This line of research is explored in~\cite{huang2022matrix, henriksen2020concentration, kathuria2020concentration}. 

For example, we can apply~\cite[Theorem 7.1]{huang2022matrix} to analyze the concentration of the error of the randomized Kaczmarz method~\cite{strohmer2009randomized}.
Let $\ve{e}_k = \vx_k - \vx^*$ be the error in the $k$th iteration of the RK method applied to a consistent, full-rank system $\mA\ve{x} = \ve{b}$ with $\mA \in \mathbb{R}^{m \times n}$. Define $\mY_i = \mI - \ve{a}_{j_i} \ve{a}_{j_i}^\top / \|\ve{a}_{j_i}\|^2$ to be the random contraction sampled in the $i$th iteration, and define $\mZ_k = \mY_k \mY_{k-1} \cdots \mY_1$ so that $\ve{e}_k = \mZ_k \ve{e}_0$.  If $\rho = 1 - \sigma_{\min}^2(\mA) / \|\mA\|_F^2$, then applying~\cite[Theorem 7.1]{huang2022matrix} yields 
\begin{align}
     \mathbb{P}\left[\|\ve{e}_k - \mathbb{E} \ve{e}_k\|^2 \ge t^2\right] &= \mathbb{P}\left[\|\mZ_k \ve{e}_0 - \mathbb{E} \mZ_k \ve{e}_0\|^2 \ge t^2\right] \nonumber
     \\&\le \mathbb{P}\left[\|\mZ_k - \mathbb{E} \mZ_k\|^2 \|\ve{e}_0\|^2 \ge t^2\right] \nonumber
     \\&\le n \rho^k \exp\left(\frac{-t^2}{2ek\|\ve{e}_0\|^2(\rho + 2 + 1/\rho)}\right) \label{eq:MatrixConc}
\end{align}
when $t^2 \ge 2ek\|\ve{e}_0\|^2(\rho + 2 + 1/\rho)$.  Note that this result may only be applied for $t = \Omega(\sqrt{k})$ where $k$ is the number of iterations.  We further note that this bound is qualitatively different from those that we will primarily consider in this paper; this concentration result bounds $\|\ve{e}_k - \mathbb{E}\ve{e}_k\|^2$, while we will consider bounds on $\|\ve{e}_k\|^{2} - \E\|\ve{e}_k\|^2$.
Regardless, we compare the upper bounds offered by our main results in Subsection~\ref{subsec:contributions} to the bound~\eqref{eq:MatrixConc} in our numerical experiments in Subsection~\ref{sec:bound_comparison}.

\smallskip

\emph{2. Moment bounds.}
Another line of research has sought to provide bounds on the rate of convergence of general moments of the error of variants of the Kaczmarz method.  In~\cite{pritchard2024solving}, the authors consider the Generalized Block Randomized Kaczmarz (GBRK) methods, which are a general class of iterative methods which encompass the usual randomized Kaczmarz (RK) methods~\cite{strohmer2009randomized} and block RK methods~\cite{needell2013paved}.  They show that after a given stopping time, the $d$th moment converges with exponential rate.  Our Theorem~\ref{thm:moment_bound} is most closely related to their result~\cite[Theorem 3]{pritchard2024solving}, but provides a simpler analysis, more specific bounds on the rate of convergence of the moments, and applies to a broader class of methods.

\subsection{Notation} 
We use boldfaced lower-case Latin letters (e.g., $\vx$) to denote vectors, and boldfaced upper-case Latin letters (e.g., $\mA$) to denote matrices.  We use unbolded lower-case Latin and Roman letters (e.g., $t$ and $\mu$) to denote scalars.     
We denote by $\mA_j$ the $j$th column of matrix $\mA$ and by $\ve{a}_i$ the $i$th row vector of matrix $\mA$. We let $[m]$ denote the set $\{1, 2, \cdots, m\}$.
The notation $\|\vv\|$ denotes the Euclidean norm of a vector $\vv$, and $\|\mA\|$ the operator norm and $\|\mA\|_F$ the Frobenius norm of a matrix $\mA$. 
We denote by $\sigma_{\min}{(\mA)}$ and $\sigma_{\max}{(\mA)}$ the smallest and largest singular value of the matrix $\mA$ respectively.
We use
$$
    \mA \otimes \mB = \begin{bmatrix} a_{11} \mB & \cdots & a_{1n} \mB \\ \vdots & \ddots & \vdots \\ a_{m1} \mB & \cdots & a_{mn} \mB \end{bmatrix} \in \mathbb{R}^{mp \times nq}
$$
to denote the Kronecker product of matrices $\mA \in \mathbb{R}^{m \times n}$ and $\mB \in \mathbb{R}^{p \times q}$, and the notation
$$
    \mA^{\otimes p} = \mA \otimes \mA \otimes \cdots \otimes \mA
$$
to denote the Kronecker product of $\mA$ with itself $p$ times.

\section{Linear Methods}\label{sec:linear}
In this section, we bound the moments, variance, and concentration of the error of randomized iterative methods whose errors obey a sequential linear relationship, $\ve{e}_k = \mY_k \ve{e}_{k-1}$, where $\mY_k$ is sampled from a family of $n \times n$ matrices. Specifically, in Section~\ref{subsec:linear_general}, we obtain new bounds based on the concentration of the moments techniques via a tensor lifting approach, and in Section~\ref{subsec:highprobability}, further high-probability bounds on the whole trajectory are obtained using martingale techniques.

As noted in Section~\ref{sec:intro}, the important randomized Kaczmarz (RK) and randomized Gauss--Seidel (RGS) methods for solving consistent systems of linear equations fall into this category, and we specialize the results to these popular methods in Section~\ref{sec:RK_and_RGS} and explore these bounds empirically with numerical experiments in Sections~\ref{subsec:mu},~\ref{sec:empirical_conc}, and~\ref{sec:bound_comparison}. Finally, simple lower bounds illustrate that our bounds have the right shape in terms of the iteration count (Section~\ref{seq:lower-bound}). 

\subsection{Bounds on the moments, variance, and concentration of error}\label{subsec:linear_general}
Our analysis of the variance of the error for randomized linear iterative methods uses a new bound on the second-moment of the error.
It was shown in~\cite{AgWaLu2014} (also see~\cite{WaAgLu2015, BaiWu2018}) that an exact expression for the mean squared error of the randomized Kaczmarz algorithm can be written using the matrix Kronecker product by using a ``tensor lifting'' trick.  Here, we show that this approach can be generalized and used to generate bounds on the higher-order even moments of the error of linear randomized iterative methods by using the well-known ``kernel trick''.

\begin{theorem} \label{thm:moment_bound}
    Let $\ve{e}_k = \ve{Y}_k\ve{e}_{k-1}$ where $\mY_k \sim D_k$. Denote $\mathbb{E}_i$ to be the expectation, conditional on the choices of $\ve{Y}_1, \ldots, \ve{Y}_{i}$. Then, for any $p = 1, 2, \ldots$, we have
    \[
        \prod_{i=1}^k \lambda_{\mathrm{min}}( \E_{i-1}[(\ve{Y}_i^\top\ve{Y}_i)^{\otimes p}] ) \cdot \|\ve{e}_0\|^{2p} \leq 
        \E\|\ve{e}_k\|^{2p} 
        \leq \prod_{i=1}^k\|\E_{i-1}[(\ve{Y}_i^\top\ve{Y}_i)^{\otimes p}]\| \cdot \|\ve{e}_0\|^{2p}.
    \]
    In particular, if the $\ve{Y}_k \sim \ve{Y}$ are identically distributed and we define $\mu_p := \|\E[(\ve{Y}^\top\ve{Y})^{\otimes p}]\|$ and $\eta_p := \lambda_{\mathrm{min}}( \E[(\ve{Y}^\top\ve{Y})^{\otimes p}] )$, then
    \[
        \eta_p^k \cdot \|\ve{e}_0\|^{2p}
        \leq \E\|\ve{e}_k\|^{2p}
        \leq \mu_p^k \cdot \|\ve{e}_0\|^{2p}.
    \]
\end{theorem}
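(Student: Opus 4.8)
The plan is to reduce the claim to a one-step Rayleigh-quotient estimate by lifting the error to its $p$-th tensor power and exploiting the compatibility of the Kronecker product with both matrix multiplication and inner products (the ``kernel trick''). Set $\ve{w}_k := \ve{e}_k^{\otimes p}$. The first observation is that squaring and tensoring interact cleanly: since $\langle \ve{u}^{\otimes p}, \ve{v}^{\otimes p}\rangle = \langle \ve{u}, \ve{v}\rangle^p$ for the Euclidean inner product, we get $\|\ve{w}_k\|^2 = \langle \ve{e}_k^{\otimes p}, \ve{e}_k^{\otimes p}\rangle = \|\ve{e}_k\|^{2p}$, so it suffices to track $\|\ve{w}_k\|^2$. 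The second observation is that the linear recurrence lifts: by the mixed-product property, $(\ve{Y}_k\ve{e}_{k-1})^{\otimes p} = \ve{Y}_k^{\otimes p}\,\ve{e}_{k-1}^{\otimes p}$, so $\ve{w}_k = \ve{Y}_k^{\otimes p}\ve{w}_{k-1}$.

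Next I would compute a single conditional step. Writing $\|\ve{w}_k\|^2 = \ve{w}_{k-1}^\top (\ve{Y}_k^{\otimes p})^\top \ve{Y}_k^{\otimes p}\,\ve{w}_{k-1}$ and using $(\ve{Y}_k^{\otimes p})^\top \ve{Y}_k^{\otimes p} = (\ve{Y}_k^\top\ve{Y}_k)^{\otimes p}$, taking the conditional expectation $\E_{k-1}$ (under which $\ve{w}_{k-1}$ is fixed) gives
\[
    \E_{k-1}\|\ve{w}_k\|^2 = \ve{w}_{k-1}^\top \, M_k \, \ve{w}_{k-1}, \qquad M_k := \E_{k-1}\big[(\ve{Y}_k^\top\ve{Y}_k)^{\otimes p}\big].
\]
Since $\ve{Y}_k^\top\ve{Y}_k$ is symmetric positive semidefinite, so is each Kronecker power $(\ve{Y}_k^\top\ve{Y}_k)^{\otimes p}$, and hence so is the average $M_k$. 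The Rayleigh-quotient bounds for a symmetric positive semidefinite matrix then sandwich this quadratic form:
\[
    \lambda_{\min}(M_k)\,\|\ve{w}_{k-1}\|^2 \;\le\; \E_{k-1}\|\ve{w}_k\|^2 \;\le\; \|M_k\|\,\|\ve{w}_{k-1}\|^2.
\]

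Finally I would iterate. Taking total expectations via the tower property and recalling $\|\ve{w}_j\|^2 = \|\ve{e}_j\|^{2p}$ turns the one-step estimate into $\lambda_{\min}(M_k)\,\E\|\ve{e}_{k-1}\|^{2p} \le \E\|\ve{e}_k\|^{2p} \le \|M_k\|\,\E\|\ve{e}_{k-1}\|^{2p}$; unrolling from $k$ down to $0$ yields the stated product bounds with $\|\ve{e}_0\|^{2p}$ as the base case. When the $\ve{Y}_k$ are sampled independently (so that $\E_{k-1}$ acts only on $\ve{Y}_k$), each $M_k$ is a deterministic matrix and the products are genuine numbers; the i.i.d.\ specialization is then immediate, since $M_k \equiv \E[(\ve{Y}^\top\ve{Y})^{\otimes p}]$ for every $k$, giving $\eta_p^k$ and $\mu_p^k$. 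The main thing to get right is the bookkeeping of the two Kronecker identities (that tensoring commutes with the matrix--vector product and with transposition, so that $(\ve{Y}^{\otimes p})^\top\ve{Y}^{\otimes p} = (\ve{Y}^\top\ve{Y})^{\otimes p}$) together with the kernel-trick identity $\|\ve{v}^{\otimes p}\|^2 = \|\ve{v}\|^{2p}$; these are precisely what let an $\ell^{2p}$ quantity be rewritten as an honest squared $\ell^2$-norm of a lifted vector, after which the spectral sandwich and the iteration are routine.
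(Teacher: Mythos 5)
Your proof is correct and follows essentially the same route as the paper's: both lift the error via the kernel trick ($\|\ve{e}_k\|^{2p} = \|\ve{e}_k^{\otimes p}\|^2$), use the mixed-product property to reduce to the quadratic form of $\E_{k-1}[(\ve{Y}_k^\top\ve{Y}_k)^{\otimes p}]$, apply the Rayleigh-quotient (min--max) sandwich for this positive semidefinite matrix, and iterate via the tower property. The only difference is presentational---you lift the vector $\ve{e}_k$ first and track $\ve{w}_k = \ve{e}_k^{\otimes p}$, while the paper lifts inside the inner product---and your explicit remark that independence is what makes each $M_k$ deterministic (so the products are genuine scalars) is a point the paper's proof glosses over.
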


\begin{proof}
    First, we note that
    \begin{equation}
        \|\ve{e}_k\|^{2p}= \ip {\ve{Y}_k^\top\ve{Y}_k\ve{e}_{k-1}}{\ve{e}_{k-1}}^{p} = \ip {(\ve{Y}_k^\top\ve{Y}_k\ve{e}_{k-1})^{\otimes p}}{\ve{e}_{k-1}^{\otimes p}}
    \end{equation}
    by the kernel trick (e.g., \cite[Exercise 3.7.4]{vershynin2018high}). Recall that $\E_{k-1}$ denotes the expectation operator conditioned on the choices of $\mY_1, \dots, \mY_{k-1}$. By the law of total expectation, we obtain
    \begin{equation} \label{eq:moment_expression}
    \begin{aligned}
        \E\|\ve{e}_k\|^{2p}
        &= \E[\E_{k-1}\ip {(\ve{Y}_k^\top\ve{Y}_k\ve{e}_{k-1})^{\otimes p}}{\ve{e}_{k-1}^{\otimes p}}] \\
        &= \E[\ip {\E_{k-1}[(\ve{Y}_k^\top\ve{Y}_k)^{\otimes p}]\ve{e}_{k-1}^{\otimes p}}{\ve{e}_{k-1}^{\otimes p}}],
    \end{aligned}
    \end{equation}
    where the second equality follows from the mixed-product property of the Kronecker product.
    Hence, by applying the min-max variational theorem for the positive semidefinite matrix $\E_{k-1}[(\ve{Y}_k^\top\ve{Y}_k)^{\otimes p}]$ in~\eqref{eq:moment_expression}, we deduce that the following upper bound holds:
    \[
        \E\|\ve{e}_k\|^{2p}
        \leq \E[\|\E_{k-1}[(\ve{Y}_k^\top\ve{Y}_k)^{\otimes p}]\| \cdot \|\ve{e}_{k-1}\|^{2p}]
        = \|\E_{k-1}[(\ve{Y}_k^\top\ve{Y}_k)^{\otimes p}]\| \cdot \E\|\ve{e}_{k-1}\|^{2p}.
    \]
    Similarly, the following lower bound holds:
    \[
        \E\|\ve{e}_k\|^{2p}
        \geq \lambda_{\mathrm{min}}(\E_{k-1}[(\ve{Y}_k^\top\ve{Y}_k)^{\otimes p}]) \cdot \E\|\ve{e}_{k-1}\|^{2p}.
    \]
    The result follows by iterating these bounds and applying the law of total expectation.
\end{proof}

While Theorem~\ref{thm:moment_bound} provides a bound on every even moment of the squared-error, our focus will be on $\mu_2$ and $\eta_1$, so we drop the subscript and simply write $\mu := \mu_2$ and $\eta := \eta_1$.  In particular, one may use Theorem~\ref{thm:moment_bound} in this case to prove Theorem~\ref{thm:main_linear}. 

\begin{proof}[Proof of Theorem~\ref{thm:main_linear}]
    Recall that the variance of $\|\ve{e}_k\|^2$ is given by $\Var(\|\ve{e}_k\|^2) = \E\|\ve{e}_k\|^4 - (\E\|\ve{e}_k\|^2)^2$.
    By applying Theorem~\ref{thm:moment_bound} with $p = 2$, we obtain the upper bound
    \[
        \E\|\ve{e}_k\|^4 \leq \mu^k \cdot \|\ve{e}_0\|^4
    \]
    with $\mu = \|\E[(\ve{Y}^\top\ve{Y})^{\otimes 2}]\|$.
    Similarly, by applying the same result with $p = 1$, we obtain the lower bound
    \[
        \E\|\ve{e}_k\|^2 \geq \eta^k \cdot \E\|\ve{e}_0\|^2
    \]
    with $\eta = \lambda_{\mathrm{min}}(\E [\mY^\top \mY])$. Combining these bounds implies that $\Var(\|\ve{e}_k\|^{2}) \leq (\mu^k - \eta^k) \cdot \|\ve{e}_0\|^{4}$.
\end{proof}

\begin{remark} \label{rmk:linear_concentration_confint}
The bound on the variance from Theorem~\ref{thm:main_linear} immediately implies the following concentration result by applying Chebyshev's inequality:
\begin{equation}
    \PP(\left|\|\ve{e}_k\|^{2} - \E\|\ve{e}_k\|^2\right| \geq t) \leq \frac{\mu^k - \eta^k}{t^2}. \label{eq:linear_conc_ineq}
\end{equation}
In particular, this implies that for any $\epsilon \in (0, 1)$, we have
\begin{equation}
    \PP\left(\left|\|\ve{e}_k\|^{2} - \E\|\ve{e}_k\|^2\right| \geq \sqrt{\frac{\mu^k - \eta^k}{\epsilon}} \|\ve{e}_0\|^2\right) \le \epsilon.
\end{equation}
Hence, with probability at least $1 - \epsilon$, the squared error norm lies in the interval $\E\|\ve{e}_k\|^2 \pm \sqrt{(\mu^k - \eta^k) \epsilon^{-1}} \|\ve{e}_0\|^2$.
\end{remark}

\begin{remark}
    Since Theorem~\ref{thm:moment_bound} does not, in general, assume that the $\mY_k$ are sampled independently or identically, the bounds are applicable if one is able to estimate bounds for the conditional expectations
    \[
        \mu^{(j)} \ge \|\E_{j-1}[(\ve{Y}_j^\top\ve{Y}_j)^{\otimes2}]\|
        \quad \text{and} \quad
        \eta^{(j)} \le \lambda_{\mathrm{min}}(\E_{j-1} [\mY_j^\top\mY_j]).
    \]
    This generalization is relevant for the important case of randomized Kaczmarz or randomized Gauss--Seidel with iteration-dependent step-sizes, or for hybrid greedy and random sampling techniques, like in~\cite{marshall2023optimal,jeong2025stochastic,DLHN16SKM}.\label{remark:not_iid}
\end{remark}

Next, we specify the results obtained by applying Theorem~\ref{thm:main_linear} to commonly studied randomized linear iterative methods. 

\subsubsection{Randomized Kaczmarz and randomized Gauss--Seidel} \label{sec:RK_and_RGS}

The \emph{randomized Kaczmarz (RK)} methods are members of the family of Kaczmarz methods, classical examples of \emph{row-action} iterative methods.  These methods consist of sequential orthogonal projections towards the solution set of a single equation~\cite{Kac37:Angenaeherte-Aufloesung}; the $j$th iterate is recursively defined as
\begin{equation}
    \vx_{j} = \vx_{j-1} - \frac{\va_{i_{j}}^\top \vx_{j-1} - b_{i_j}}{\|\va_{i_{j}}\|^2} \va_{i_j}, \label{eq:RKupdate}
\end{equation}
where $\va_{i_j}^\top$ is the $i_j$th row of the matrix $\mA$ and $b_{i_j}$ is the $i_j$th entry of $\vb$.
As mentioned previously, the RK method for solving a linear system $\mA \ve{x} = \ve{b}$ has error vectors $\ve{e}_k = \ve{x}_k - \ve{x}^*$ that satisfy the recursive relation
\begin{align*}
    \ve{e}_j = \vx_{j-1} - \vx^* - \frac{\ve{a}_{i_j}^\top \vx_{j-1} - \ve{a}_{i_j}^\top \vx^*}{\|\ve{a}_{i_j}\|^2} \ve{a}_{i_j} = \left(\mI - \frac{\ve{a}_{i_j} \ve{a}_{i_j}^\top}{\|\ve{a}_{i_j}\|^2}\right)(\vx_{j-1} - \vx^*) &= \mY_j \ve{e}_{j-1},
\end{align*}
where $\mY_{j} = \mI - \ve{a}_{i_j} \ve{a}_{i_j}^\top / \norm{\ve{a}_{i_j}}^2$ is a random orthogonal projection matrix corresponding to a projection of the error onto the subspace orthogonal to row $\va_{i_j}$.
The RK methods saw a renewed surge of interest after the elegant convergence analysis of the RK method in~\cite{strohmer2009randomized}.  The authors showed that for a consistent system with unique solution $\vx^*$, if the row $i_j$ is sampled with probability $\norm{\mathbf{a}_{i_j}}^2 / \norm{\mathbf{A}}_F^2$ in each iteration, then RK converges at least linearly in expectation with the guarantee~\eqref{eq:RKrate}.

The \emph{randomized Gauss--Seidel (RGS)} methods are a related family of \emph{column-action} iterative methods that focus on updating a single coordinate (or subset of coordinates) in each iteration to minimize the residual error; see e.g.,~\cite{ma2015convergence}.  The $j$th iterate is recursively defined as
\begin{equation}
    \vx_{j} = \vx_{j-1} - \frac{\mA_{i_j}^\top(\mA \vx_{j-1} - \vb)}{\|\mA_{i_{j}}\|^2} \ve{c}_{i_j}, \label{eq:GSupdate}
\end{equation} 
where $\mA_{i_j}$ is the $i_j$th column of $\mA$ and $\vc_{i_j}$ is the $i_j$th standard basis vector. The RGS residual errors $\ve{e}_k = \mA \vx_k - \mA \vx^*$ satisfy the recursive relation
\begin{align*}
    \ve{e}_j
    = \mA \vx_{j-1} - \mA \vx^* - \frac{\mA_{i_j}^\top(\mA\vx_{j-1} - \mA\vx^*)}{\|\mA_{i_j}\|^2} \mA \ve{c}_{i_j}
    &= \left(\mI - \frac{\mA_{i_j} \mA_{i_j}^\top}{\|\mA_{i_j}\|^2}\right) \mA (\vx_{j-1} - \vx^*) = \mY_{j}\ve{e}_{j-1},
\end{align*}
where $\mY_j = \mI - \mA_{i_j} \mA_{i_j}^\top / \|\mA_{i_j}\|^2$ is a randomly sampled projection matrix corresponding to a projection of the residual error onto the subspace orthogonal to column $\mA_{i_k}$.
It was shown in~\cite{LL10:Randomized-Methods} that for a consistent system, if the column $i_j$ is sampled with probability $\norm{\mA_{i_j}}^2 / \norm{\mathbf{A}}_F^2$ in each iteration, then RGS also converges at least linearly in expectation with the same guarantee~\eqref{eq:RKrate} for the residual error $\ve{e}_k = \mA \ve{x}_k - \ve{b}$.

Note that in both of these cases, the $\mY_k$ matrices are quite nice: they are independent and identically distributed copies of a random orthogonal projection matrix $\mY$ ($\mY^\top \mY = \mathbf{Y}^2 = \mathbf{Y}$) that is positive semidefinite ($\mathbf{Y} \succeq \mathbf{0}$) and a contraction ($\norm{\mathbf{Y}} \leq 1$).
Moreover, for RK, we have the closed-form expression
\[
    \ev{\mathbf{Y}}
    = \mathbf{I} - \sum_{i=1}^m \frac{\norm{\mathbf{a}_{i}}^2}{\norm{\mathbf{A}}_F^2} \frac{\mathbf{a}_{i} \mathbf{a}_{i}^\top}{\norm{\mathbf{a}_{i}}^2}
    = \mathbf{I} - \frac{\mathbf{A}^\top \mathbf{A}}{\norm{\mathbf{A}}_F^2}.
\]
Similarly, for RGS, we have
\[
    \ev{\mathbf{Y}}
    = \mathbf{I} - \sum_{j=1}^n \frac{\norm{\mA_{j}}^2}{\norm{\mathbf{A}}_F^2} \frac{\mA_j \mA_j^{\top}}{\norm{\mA_j}^2}
    = \mathbf{I} - \frac{\mathbf{A} \mathbf{A}^{\top}}{\norm{\mathbf{A}}_F^2}.
\]
Note that by~\cite[Theorem~4.2]{BaiWu2018}, when $m \ge n$ and $\mA$ is full rank,
\begin{equation}
    \mu = \norm{\ev{(\mY^\top\mY)^{\otimes2}}} \leq 1 - \frac{\sigma^2_{\mathrm{min}}(\mathbf{A})}{\norm{\mathbf{A}}_F^2}. \label{eq:bound_mu}
\end{equation}
This follows from the observation that $\mathbf{Y} \succeq 0$ and $\mathbf{I} - \mathbf{Y} \succeq 0$ almost surely (because $\mathbf{Y}$ are positive semidefinite contraction matrices), which implies that $\mathbf{I} \otimes \mathbf{Y} - \mathbf{Y} \otimes \mathbf{Y} = (\mathbf{I} - \mathbf{Y}) \otimes \mathbf{Y} \succeq \mathbf{0}$ and hence 
\[
    \mu = \norm{\ev{\mathbf{Y} \otimes \mathbf{Y}}} \leq \norm{\ev{\mathbf{I} \otimes \mathbf{Y}}} = \norm{\ev{\mathbf{Y}}} = 1 - \frac{\sigma_{\min}^2(\mA)}{\|\mA\|_F^2}.
\]
Moreover,
\begin{equation}
    \eta = \lambda_{\min}(\ev{\mY^\top \mY}) = \lambda_{\min}(\ev{\mY}) = \lambda_{\min}\left(\mI - \frac{\mA^\top \mA}{\|\mA\|_F^2}\right) = 1 - \frac{\sigma_{\max}^2(\mA)}{\|\mA\|_F^2}. \label{eq:bound_nu}
\end{equation}
Thus, applying Theorem~\ref{thm:main_linear}, we may bound the variance of the squared norm of the error of both the RK and RGS methods by
\begin{equation}
    \mathrm{Var}(\norm{\ve{e}_k}^2) \leq \left(\left( 1 - \frac{\sigma^2_{\mathrm{min}}(\mathbf{A})}{\norm{\mathbf{A}}_F^2} \right)^{k} - \left( 1 - \frac{\sigma^2_{\mathrm{max}}(\mathbf{A})}{\norm{\mathbf{A}}_F^2} \right)^{k}\right) \cdot \norm{\ve{e}_0}^4. \label{eq:RK_variance}
\end{equation}

\begin{remark}
    Using the same notation as Theorem~\ref{thm:main_linear}, note that~\eqref{eq:bound_mu} can be generalized for the RK and RGS methods to show that for all $p \in \mathbb{N}$,
    \[
        \mu_p \le \mu_{p-1} \le \cdots \le \mu_2 \le \|\E \mY_k\| = 1 - \frac{\sigma_{\min}^2(\mA)}{\|\mA\|_F^2},
    \]
    since
    \[
        \mu_p = \|\E[\underbrace{\ve{Y}\otimes \cdots\otimes\ve{Y}}_{\text{$p$ times}}]\| \leq \|\E[\mI\otimes\underbrace{ \ve{Y}\otimes \cdots\otimes\ve{Y}}_{\text{$p-1$ times}}]\| = \|\E[\underbrace{ \ve{Y}\otimes \cdots\otimes\ve{Y}}_{\text{$p-1$ times}}]\|= \mu_{p-1}.
    \]
\end{remark}

\subsubsection{The $\mu$ parameter} \label{subsec:mu}

For the RK and RGS methods, we are able to show that
\[
    \mu = \norm{\ev{(\mY^\top\mY)^{\otimes2}}} \leq 1 - \frac{\sigma_{\min}^2(\mA)}{\|\mA\|_F^2} =: r,
\]
and this bound yields~\eqref{eq:RK_variance}.  Thus, we know that $\log_r(\mu) \ge 1$, and the bound on the variance improves as $\log_r(\mu)$ increases.  We explore the value of $\log_r(\mu)$ in Figure~\ref{fig:mu_vs_r}.
We compute the parameter
\begin{equation}
    \mu = \left\|\sum_{i=1}^m \frac{\|\va_i\|^2}{\|\mA\|_F^2}(\ve{Y}_i)^{\otimes 2}\right\| \quad \text{with } \mathbf{Y}_{i} = \mathbf{I} - \frac{\mathbf{a}_{i} \mathbf{a}_{i}^\top}{\norm{\mathbf{a}_{i}}^2} \label{eq:muforRK}
\end{equation}
for row-normalized Gaussian matrices of various sizes with entries generated independently from $\mathcal{N}(0,10)$, and calculate $\log_r(\mu)$. 
We know by~\eqref{eq:bound_mu} that $\mu \le r$ when $m \ge n$.  We observe that $\mu$ tends towards $r$ as the system becomes more overdetermined towards to the lower left corner of the heatmap, but that we have $\mu \approx r^2$ for systems that are approximately square (i.e., near the diagonal).
When $m < n$, we have $r < \mu = 1$, and thus $\log_r(\mu) = 0$ in the upper right half of the heatmap. 

\begin{figure}
    \centering
    \includegraphics[width = 0.6\textwidth]{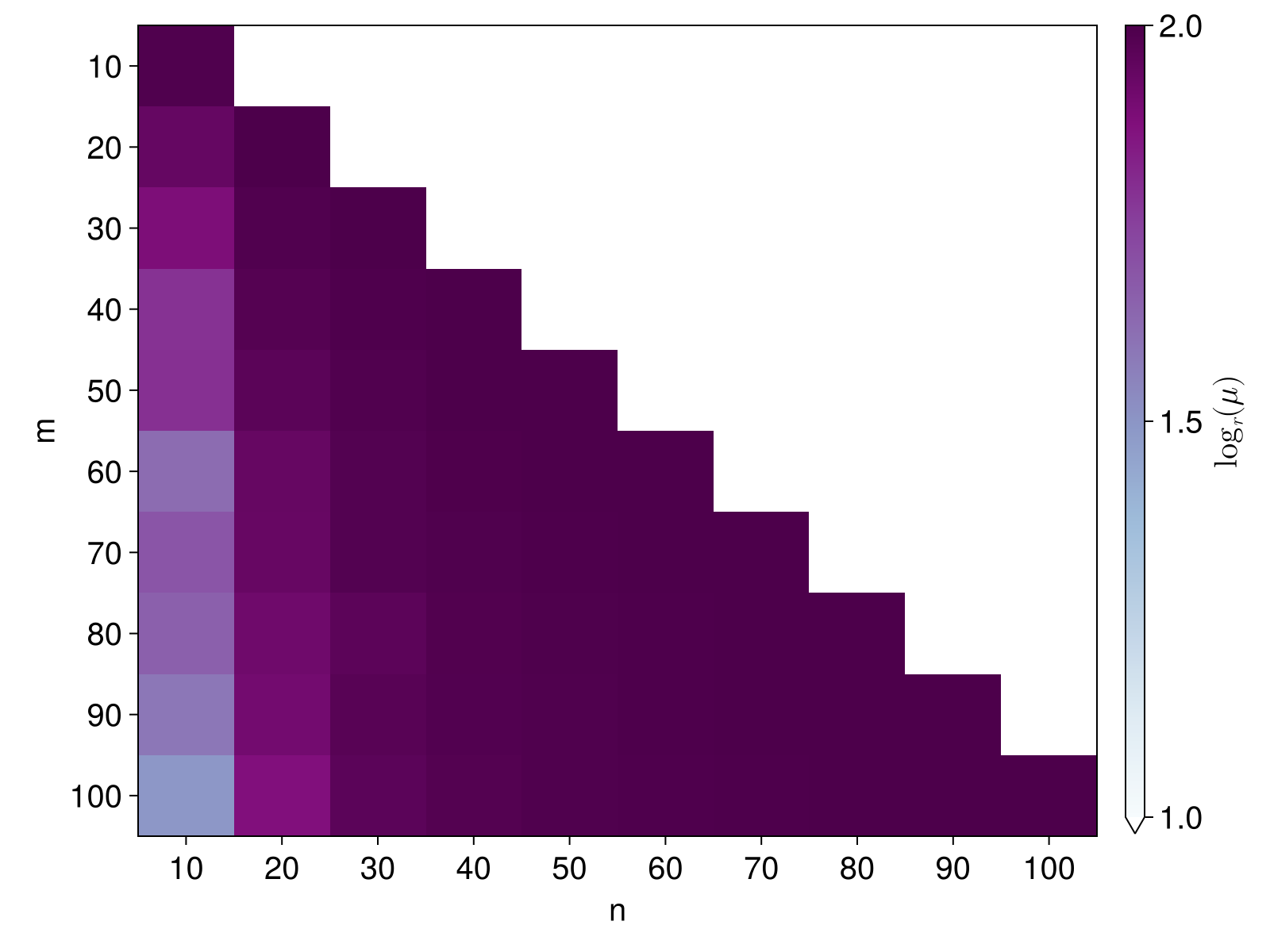}
    \caption{
    The relationship between $\mu$ and the RK convergence rate, $r = 1 - \frac{\sigma_{\min}^2(\mA)}{\|\mA\|_F^2}$. For each cell, we initialized five $m\times n$ row-normalized Gaussian matrices. We compute the $\mu$ parameter as in~\eqref{eq:muforRK} and plot the average value of $\log_r(\mu)$ across the five trials in each cell. When $n > m$, we note that $\log_r(\mu) = 0$.
    }
    \label{fig:mu_vs_r}
\end{figure}

\subsubsection{Empirical performance of concentration bound} \label{sec:empirical_conc}

Given the concentration bounds derived by combining Theorem~\ref{thm:main_linear} and Chebyshev's inequality, we expect the errors of RK to be more concentrated around their mean when the matrix is well-conditioned. We explore this empirically in the left plots of Figure~\ref{fig:emp_conc}.  We plot the empirical squared error of RK over 100 iterations across 500 runs.  We color these error curves according to a gradient which indicates what fraction of trials had error below that of the given trial (more blue means more trials had smaller error). We illustrate the bound on the mean error given by~\eqref{eq:RKrate} (black solid lines) and plot the empirical mean error (white dashed lines).  Finally, we plot the 75\% (red dashed lines) and 95\% (green dashed lines) confidence intervals derived from combining Theorem~\ref{thm:main_linear} with~\eqref{eq:mu_eta_bound} and Chebyshev's inequality.  We note that the confidence interval is centered at the empirical mean of the errors across the trials.

In Figure~\ref{fig:emp_conc}, we generate a matrix $\mA \in \mathbb{R}^{1,000 \times 20}$ with the singular values plotted in the corresponding right plots by generating a matrix $\tilde{\mA} \in \mathbb{R}^{1,000 \times 20}$ with entries sampled i.i.d.\ from $\mathcal{N}(0,1)$, computing the singular value decomposition $\tilde{\mA} = \mU \tilde{\mathbf{\Sigma}} \mV^\top$, and defining $\mA = \mU \mathbf{\Sigma}\mV^\top$ where the entries of $\mathbf{\Sigma}$ on the diagonal are as specified in Subfigures~\ref{subfig:well_cond_conc} and~\ref{subfig:med_cond_conc} and plotted in the corresponding right plots.  In the top row of plots in Figure~\ref{fig:emp_conc}, the experiments are run with $\mA_1$ where $\sigma_i(\mA_1) = 1 - (i-1)/m$; in the middle row of plots, we have $\mA$ with entries sampled i.i.d.\ from $\mathcal{N}(0,1)$; and in the bottom row of plots, we have $\sigma_i(\mA_2) = 1/i$.

\begin{figure}
    \begin{subfigure}[t]{\textwidth}
    \includegraphics[width=0.4\textwidth]{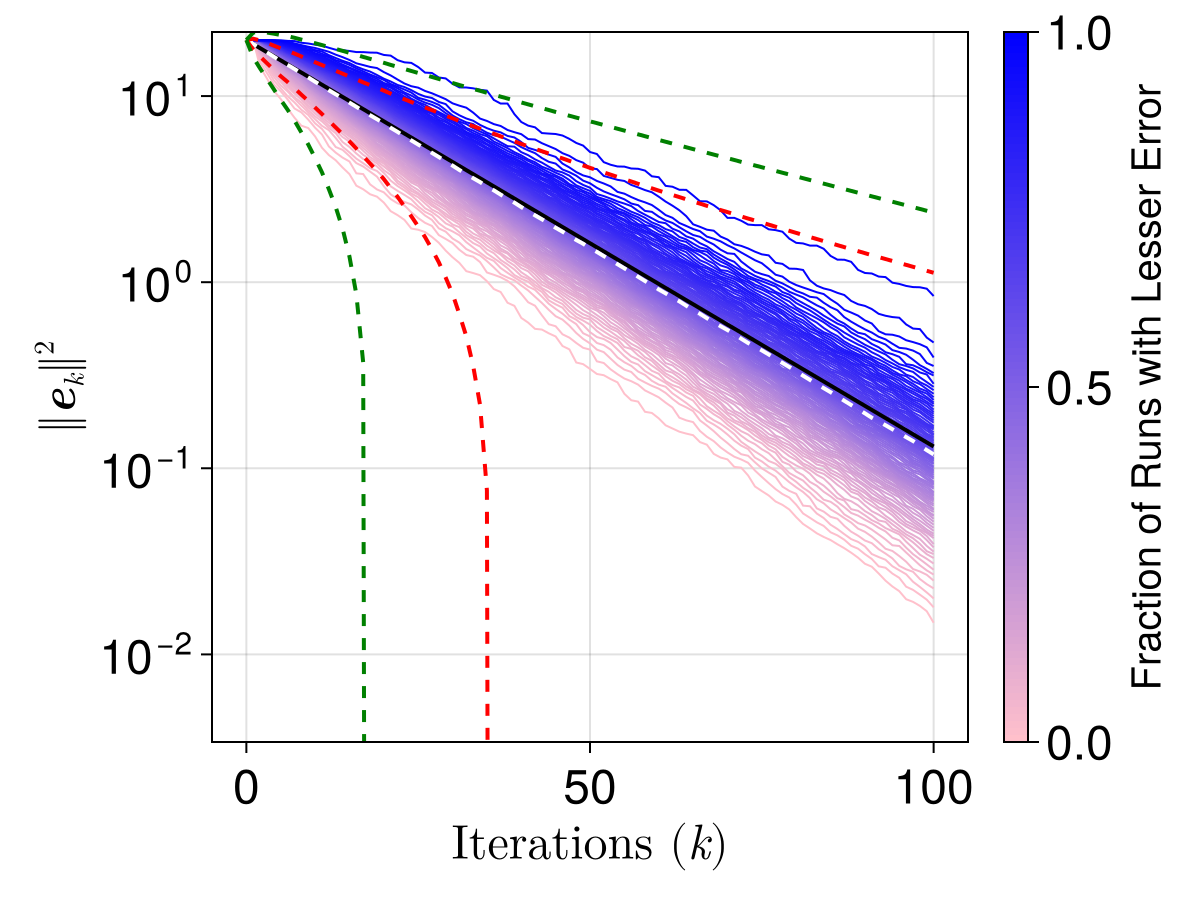}\hfil%
    \includegraphics[width=0.4\textwidth]{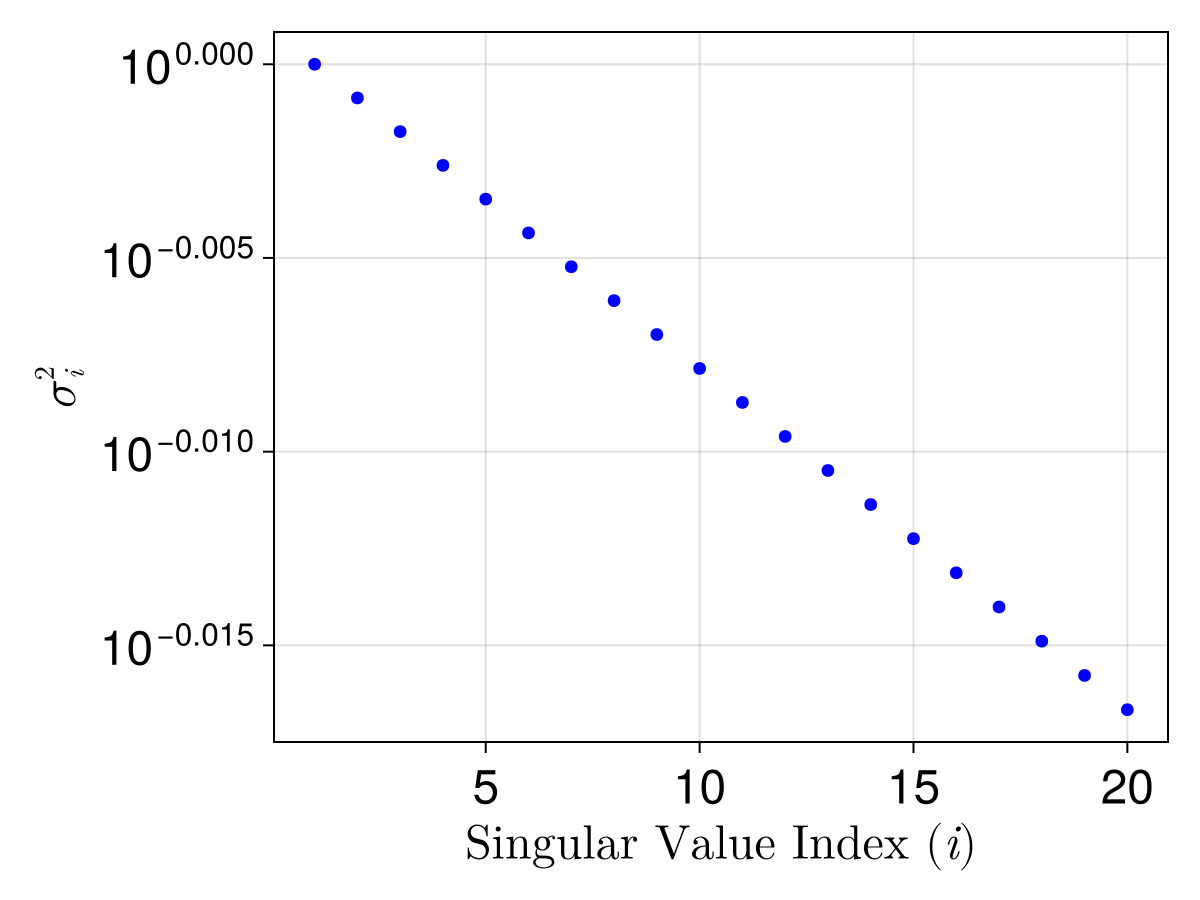}
    \caption{$\sigma_i(\mA_1) = 1 - (i-1)/m$, $\kappa(\mA_1) = 1.02$}\label{subfig:well_cond_conc}
    \end{subfigure}

    \begin{subfigure}[t]{\textwidth}
    \includegraphics[width=0.4\textwidth]{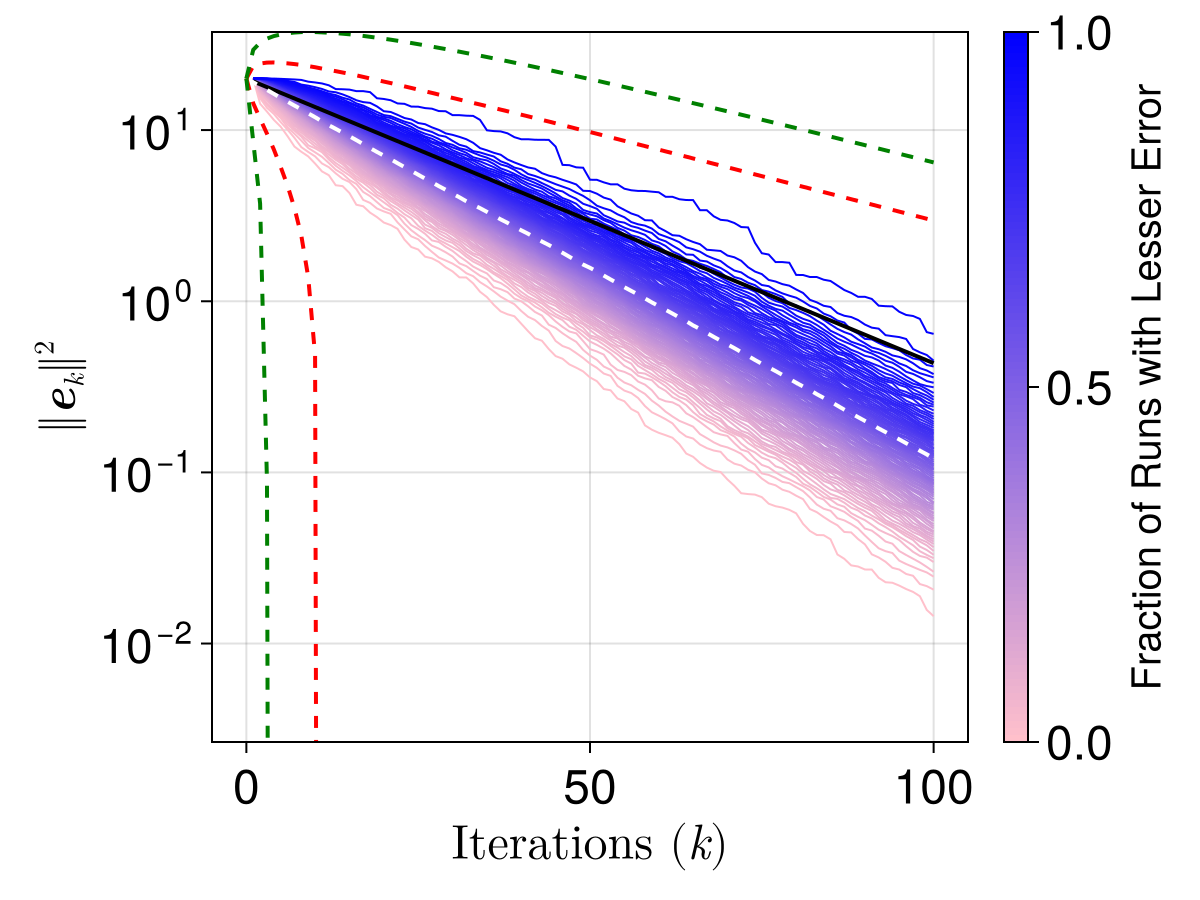}\hfil%
    \includegraphics[width=0.4\textwidth]{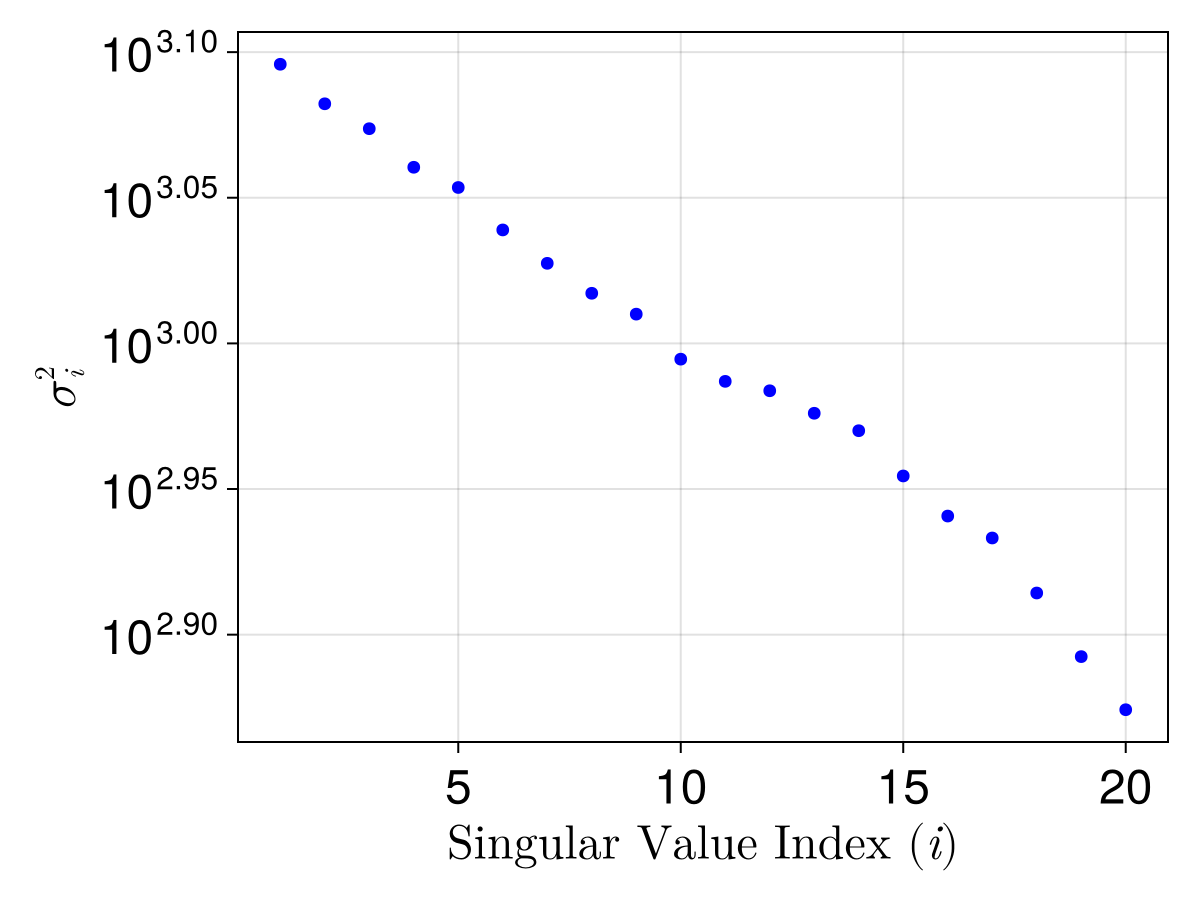}
    \caption{Gaussian matrix, $\kappa(\mA) = 1.29$}\label{subfig:gauss_conc}
    \end{subfigure}

    \begin{subfigure}[t]{\textwidth}
    \includegraphics[width=0.4\textwidth]{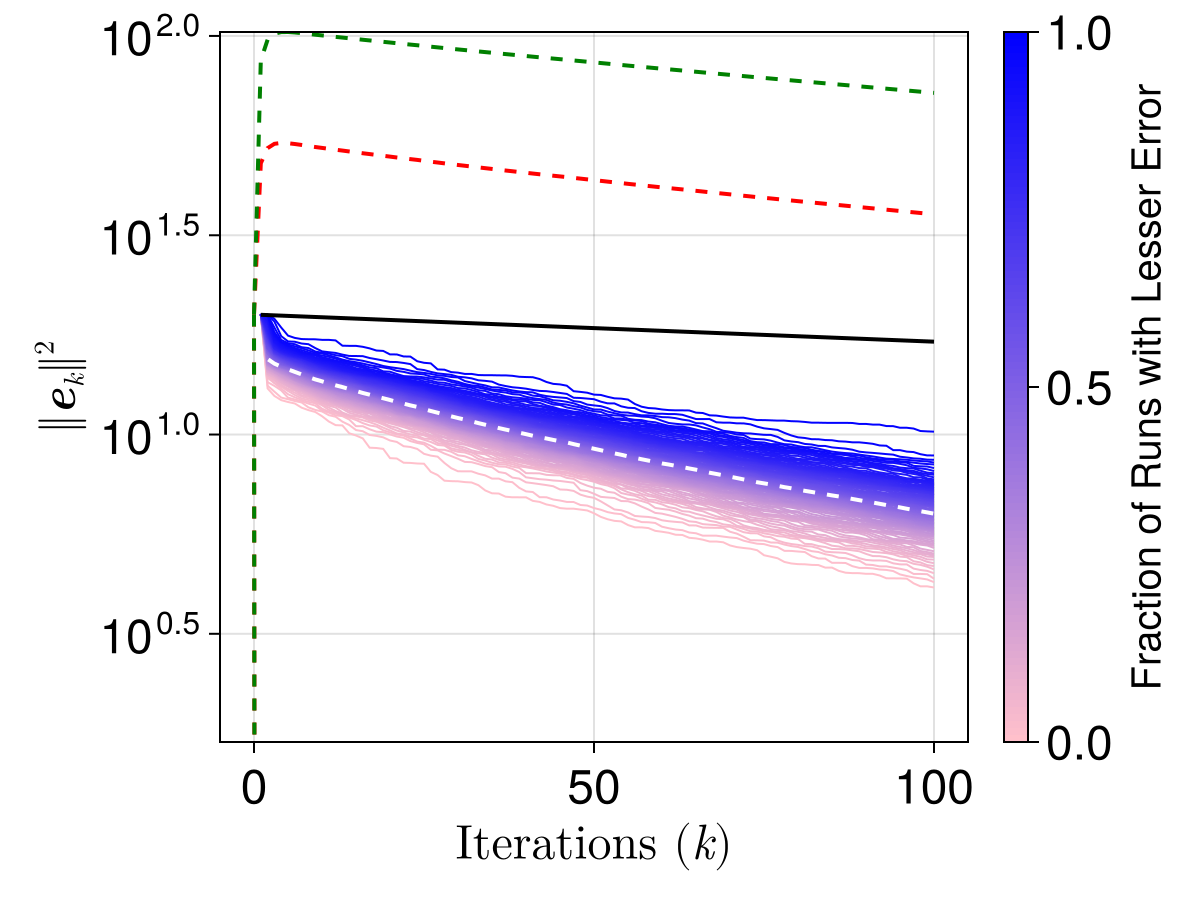}\hfil%
    \includegraphics[width=0.4\textwidth]{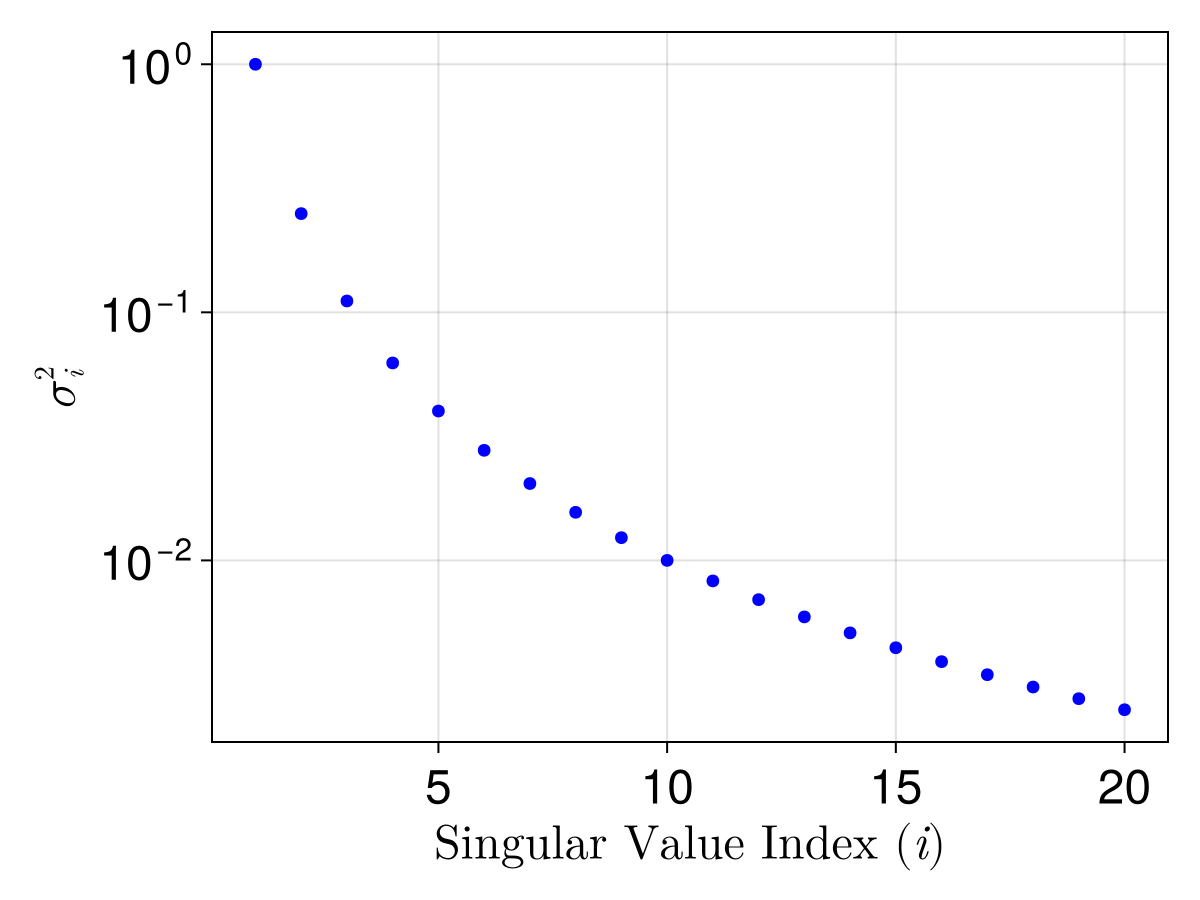}
    \caption{$\sigma_i(\mA_2) = 1/i$, $\kappa(\mA_2) = 20.00$}\label{subfig:med_cond_conc}
    \end{subfigure}
    \caption{(Left) Visualization of errors of 500 independent trials of RK applied to $\mA \vx = \vb$ where $\mA \in \mathbb{R}^{1000 \times 20}$ has singular values given in subfigure captions (pink-blue gradient indicates quantiles of errors).  Empirical mean error (white dashed line), bound~\eqref{eq:RKrate} (black solid line), and the 75\% (red dashed lines) and 95\% (green dashed lines) confidence intervals for the error derived by combining Chebyshev's inequality with Theorem~\ref{thm:main_linear} and~\eqref{eq:mu_eta_bound} are plotted. (Right) Spectral profile for $\mA$. 
    }\label{fig:emp_conc}
\end{figure}

We note that, like the bound~\eqref{eq:RKrate}, the concentration bound derived from combining Theorem~\ref{thm:main_linear} with~\eqref{eq:mu_eta_bound} and Chebyshev's inequality is quite sensitive to the conditioning of the problem-defining matrix.  

\subsubsection{Comparison of concentration bounds}\label{sec:bound_comparison}

We now compare the values of the concentration bound offered by Theorem~\ref{thm:main_linear} for the RK and RGS methods (top), Lemma~\ref{lem:markov}~\eqref{eq:markov} (middle), and the concentration bound~\eqref{eq:MatrixConc} (bottom) which is a consequence of~\cite[Theorem 7.1]{huang2022matrix}, for a variety of choices of constant $t$ and iteration number $k$.
In these plots, we generate matrices as described in Subsection~\ref{sec:empirical_conc}.
We define a well-conditioned matrix $\mA_1 \in \mathbb{R}^{1,000 \times 20}$ (condition number $\kappa(\mA_1) = 1.02$), a Gaussian matrix $\mA \in \mathbb{R}^{1,000 \times 20}$ (condition number $\kappa(\mA) = 1.29$), and an ill-conditioned matrix $\mA_2 \in \mathbb{R}^{1,000 \times 20}$ (condition number $\kappa(\mA_2) = 20.00$).
We also take the matrix $\mA_3 \in \mathbb{R}^{1,200 \times 400}$ from a 2D tomography test problem (condition number $\kappa(\mA_3) = 21.53$), generated using the Matlab Regularization Toolbox by P.C.\ Hansen (\url{http://www.imm.dtu.dk/~pcha/Regutools/})~\cite{hansen2007regularization} with $m = f N^2$ and $n = N^2$, where $N = 20$ and the oversampling factor $f = 3$. 
In Figure~\ref{fig:HM_comparison}, we present these heatmaps for the well-conditioned matrix on the top left plot, for the Gaussian matrix on the top right plot, for the ill-conditioned matrix on the bottom left plot, and for the matrix arising from a computed tomography test problem on the bottom right plot.

\begin{figure}
    \centering
    \begin{subfigure}[t]{0.49\textwidth}
    \includegraphics[width = \textwidth]{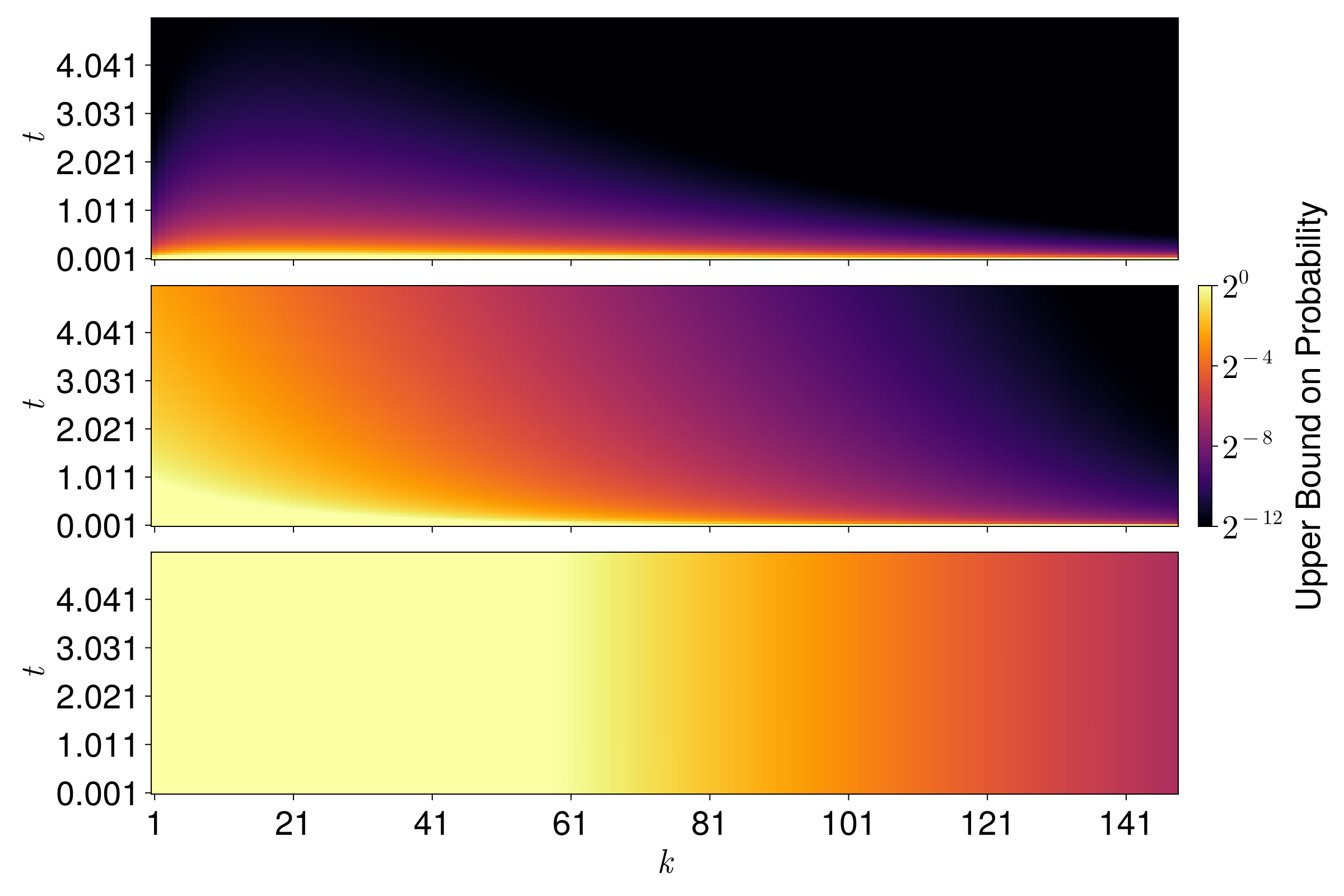}
    \caption{Well-conditioned $\mA_1 \in \mathbb{R}^{1,000 \times 20}$, $\kappa(\mA_1) = 1.02$}
    \end{subfigure}\hfill%
    \begin{subfigure}[t]{0.49\textwidth}
    \includegraphics[width = \textwidth]{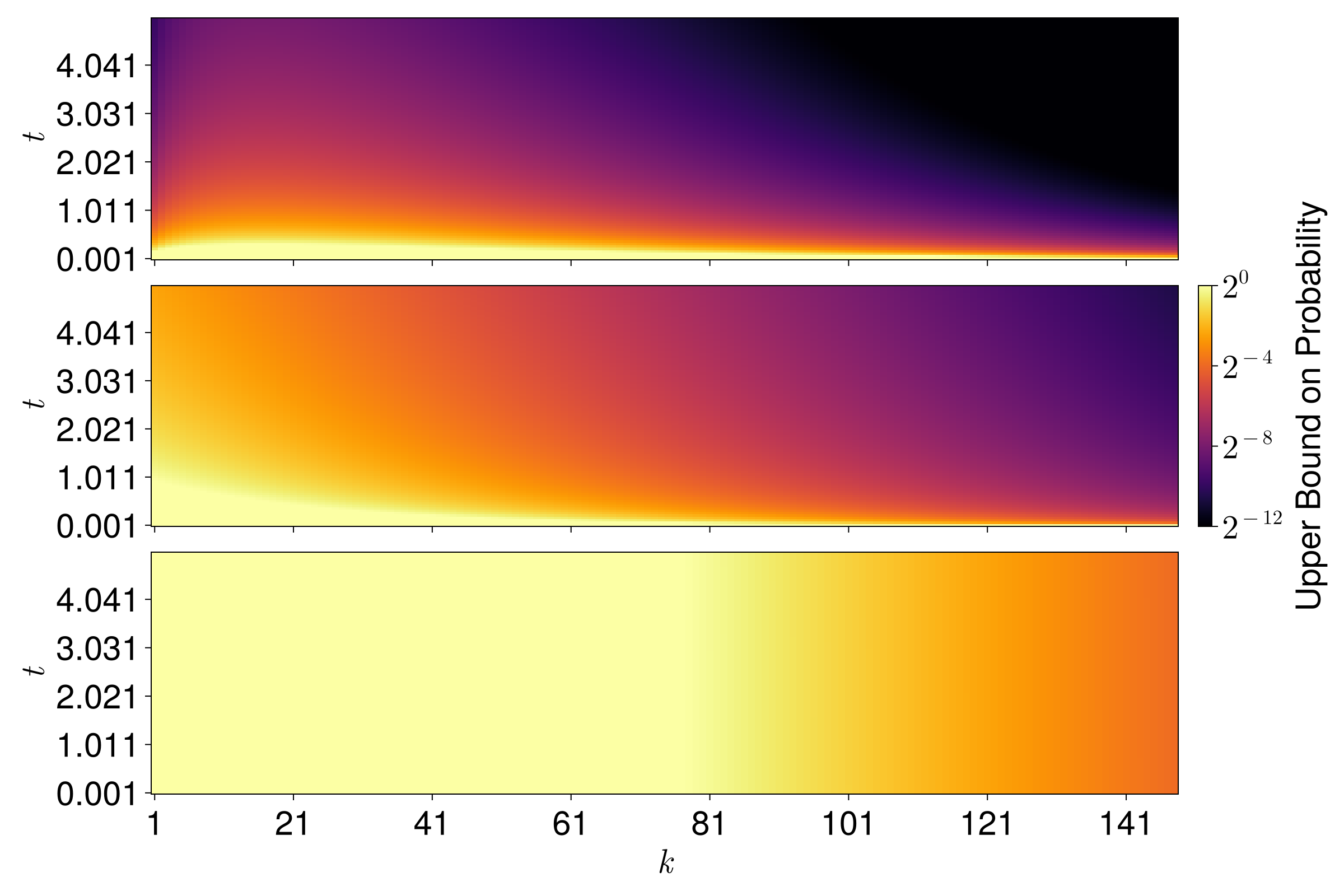}
    \caption{\small Gaussian $\mA \in \mathbb{R}^{1,000 \times 20}$, $\kappa(\mA) = 1.29$}\label{}
    \end{subfigure}
    \begin{subfigure}[t]{0.49\textwidth}
    \includegraphics[width = \textwidth]{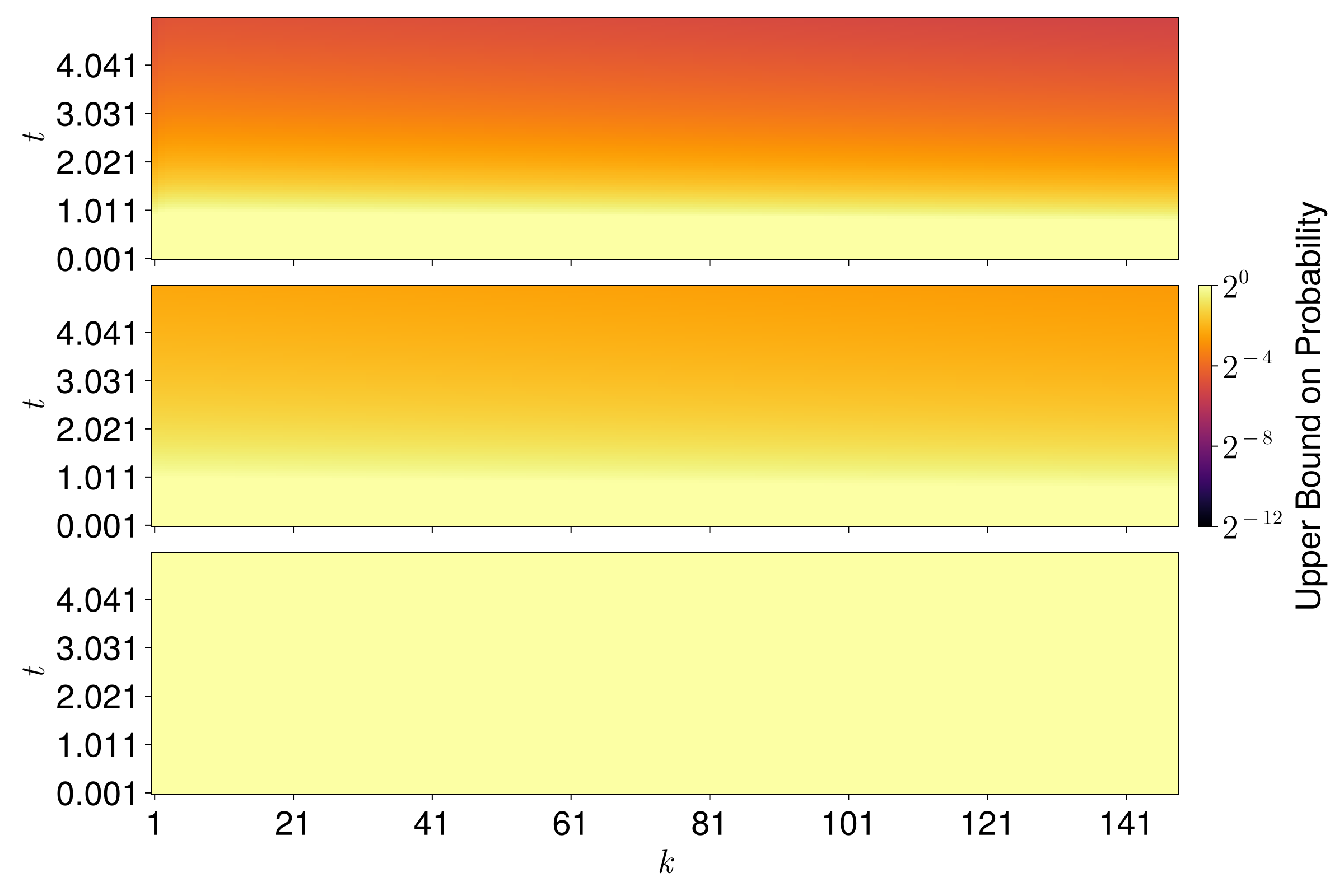}
    \caption{Ill-conditioned $\mA_2 \in \mathbb{R}^{1,000 \times 20}$, $\kappa(\mA_2) = 20.00$}
    \end{subfigure}\hfill%
    \begin{subfigure}[t]{0.49\textwidth}
    \includegraphics[width = \textwidth]{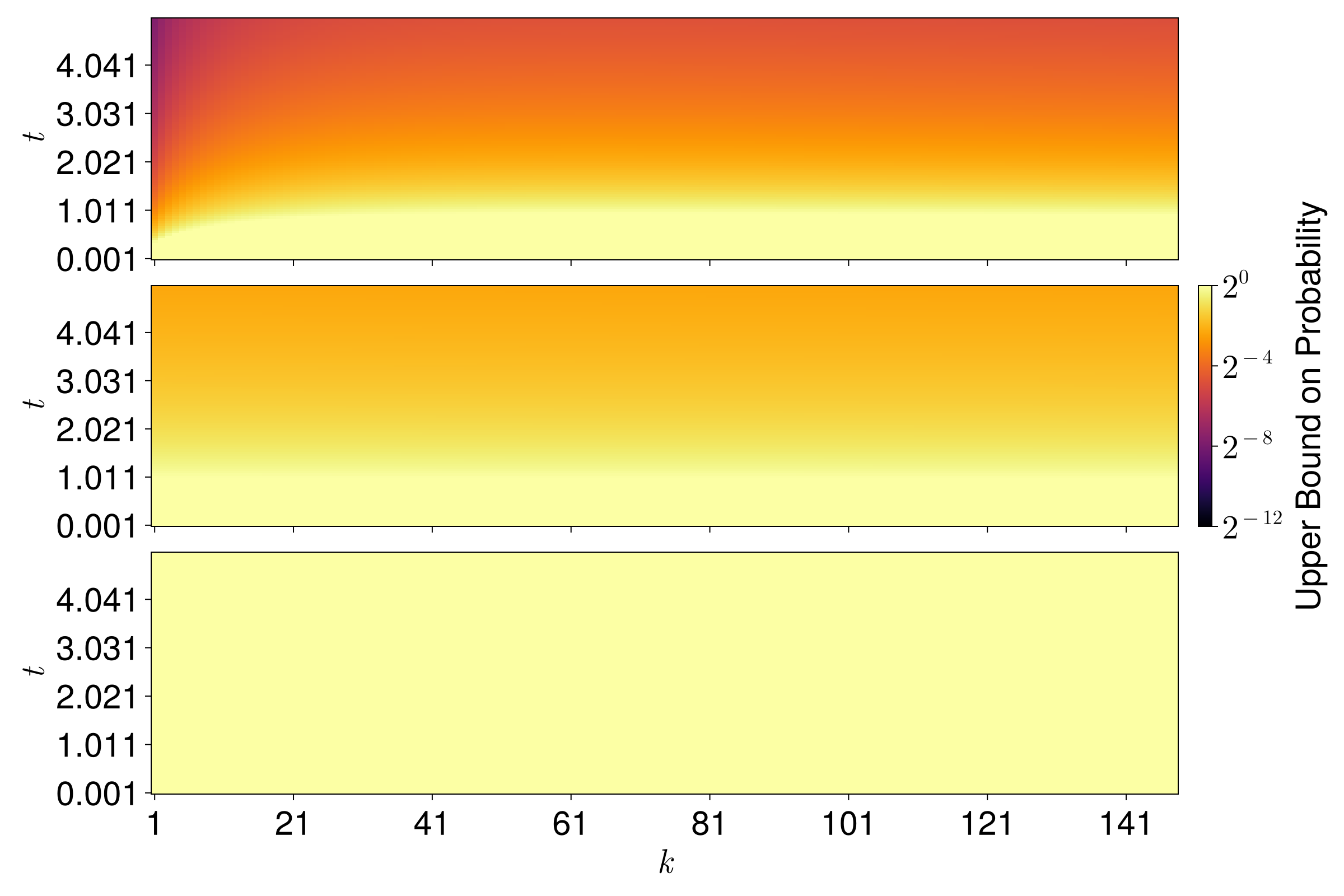}
    \caption{Tomography matrix $\mA_3 \in \mathbb{R}^{1,200 \times 400}$, $\kappa(\mA_3) = 21.53$}
    \end{subfigure}
    \caption{Comparison of the upper bounds on the probability $  \PP(\|\ve{e}_k\|^{2} - \E\|\ve{e}_k\|^2 \geq t)$, for various $k$ and $t$, resulting from Theorem~\ref{thm:main_linear} (top, our contribution), from Lemma~\ref{lem:markov}~\eqref{eq:markov} (middle, simple Markov inequality bound), and from~\eqref{eq:MatrixConc} which is a consequence of~\cite[Theorem 7.1]{huang2022matrix} (bottom, matrix concentration) for RK applied to a various matrices. Each cell corresponds to the upper bound on the probability of the squared-error exceeding its mean by $t$ after $k$ iterations. Darker cells correspond to smaller values, i.e.,  better concentration bounds. 
    }
    \label{fig:HM_comparison}
\end{figure}

We note that the bound~\eqref{eq:MatrixConc}, derived from~\cite{huang2022matrix}, is worse relative to our concentration bound, derived from Theorem~\ref{thm:main_linear}, as well as the bound from Lemma~\ref{lem:markov}~\eqref{eq:markov}.
We also observe that in most regimes of $t$ and $k$, our concentration bound derived from Theorem~\ref{thm:main_linear} is an improvement over the bound derived from Lemma~\ref{lem:markov}~\eqref{eq:markov} using Markov's inequality.

\subsection{Lower bound on concentration}\label{seq:lower-bound}

Note that for the RK method, the random variable $\|\ve{e}_k\|^2$ only takes on a finite set of values.
Since $\ve{e}_k = \mY_k \ve{e}_{k-1}$ and $\mY_k$ is sampled i.i.d.\ from the set $\{\mY_1, \mY_2, \cdots, \mY_m\}$ at iteration $k$, there are at most $m^k$ possible values for $\|\ve{e}_k\|^2$, corresponding to each possible sequence of sampled indices.  Moreover, observe that we have
\begin{equation}
    \ve{e}_k = \mY_k \ve{e}_{k-1} = \mY_k \mY_{k-1} \ve{e}_{k-2} = \mY_{k-1}\ve{e}_{k-2} = \ve{e}_{k-1}  \quad \text{if } i_k = i_{k-1}. \label{eq:sequential_index}
\end{equation} 
To simplify calculations, we assume in this section that $\ve{A}$ is row-normalized, that is $\|\ve{a}_i\| = 1$ for all $i \in [m]$.  Thus, since the norm of the error $\|\ve{e}_k\|^2$ is non-increasing, by considering the event that $i_1 = i_2 = \cdots = i_k$, we deduce that
\begin{align*}
    \mathbb{P}\left[\|\ve{e}_k\|^2  \ge \|\ve{e}_1\|^2 \right]
    &\ge \frac{1}{m} \sum_{j=1}^m \mathbb{P}\left[i_k = i_{k-1} = \cdots = i_2 = i_1 \;\Big{|}\; i_1 = j\right] = \frac{1}{m^{k-1}}.
\end{align*}
Now, for sufficiently small values of $t$ where $t \le \min_{i_1} \|\ve{e}_1\|^2 - \mathbb{E}\|\ve{e}_k\|^2$, we have that
\begin{equation}
    \mathbb{P}\left[\|\ve{e}_k\|^2 - \mathbb{E}\|\ve{e}_k\|^2 \ge t \right] \ge \mathbb{P}\left[\|\ve{e}_k\|^2  \ge \|\ve{e}_1\|^2 \right] \ge \frac{1}{m^{k-1}}.  \label{eq:anticoncentration}
\end{equation}
Thus, we note that bounds for the concentration of $\| \ve{e}_k \|^2$ must respect this lower bound.

We note that the same logic holds for the RGS method.  Again, to simplify calculations, we assume that $\mA \in \mathbb{R}^{m \times n}$ is column-normalized.  In this case, for $t \le \min_{i_1} \|\ve{e}_1\|^2 - \mathbb{E}\|\ve{e}_k\|^2$, we have
\begin{equation}
    \mathbb{P}\left[\|\ve{e}_k\|^2 - \mathbb{E}\|\ve{e}_k\|^2 \ge t \right] \ge \mathbb{P}\left[\|\ve{e}_k\|^2  \ge \|\ve{e}_1\|^2 \right] \ge \frac{1}{n^{k-1}}.  \label{eq:anticoncentration_RGS}
\end{equation}

\begin{remark}
    We note that the lower bound~\eqref{eq:anticoncentration} implies that any upper bound for the concentration of the error of the RK and RGS methods cannot decrease with the number of iterations $k$ faster than $e^{-O(k)}$, where $O(k) = Ck$ for some constant $C$.  This implies that the concentration bound~\eqref{eq:linear_conc_ineq} cannot be improved with respect to $k$ beyond constants.
\end{remark}

\subsection{High-probability bounds}\label{subsec:highprobability}

In this subsection, we prove Theorem~\ref{thm:highprobability_main} that provides high-probability bound for randomized iterative methods whose errors in sequential iterations obey a linear relationship, $\ve{e}_k = \mY_k \ve{e}_{k-1}$, and $\mY_k$ is independently sampled in the $k$th iteration from a family of $n \times n$ matrices. Part (a) shows that we can upgrade the upper bound implied by Markov's inequality for any fixed iteration number $k$ \emph{to the entire trajectory}.

\begin{proof}[Proof of Theorem~\ref{thm:highprobability_main},~Part (a)]
Consider the discrete stochastic process
\[
    Z_k \coloneqq \frac{\norm{\mathbf{e}_k}^2}{\rho^k \norm{\mathbf{e}_0}^2} .
\]
Note that if $\E_{k-1}$ denotes the expectation conditional on $\mY_1, \ldots, \mY_{k-1}$, then
\begin{equation}
    \E_{k-1}\|\ve{e}_k\|^2
    = \E_{k-1}\left[\ve{e}_{k-1}^\top\mY_k^\top \mY_k\ve{e}_{k-1}\right] = \ve{e}_{k-1}^\top\E[\mY_{k}^\top \mY_{k}]\ve{e}_{k-1} \leq \rho\|\ve{e}_{k-1}\|^2. \label{eq:expectation_decrease}
\end{equation}
Thus, $Z_k$ is a non-negative supermartingale: $\E_{k-1}[Z_k] \leq Z_{k-1}$.
Applying Doob's supermartingale inequality (e.g.,~\cite[Exercise~4.8.2]{Durrett2019}) implies that for all $\lambda > 0$,
\[
    \prob{\sup_{k \geq 0} Z_k \geq \lambda} \leq \frac{\E[Z_0]}{\lambda} = \frac{1}{\lambda}.
\]
Choosing $\lambda = \epsilon^{-1}$ and rearranging leads to the claimed result.
\end{proof}

\begin{proof}[Proof of Theorem~\ref{thm:highprobability_main}, Part (b)]
Write
\[
    \frac{\norm{\mathbf{e}_k}^2}{\norm{\mathbf{e}_0}^2} = \prod_{j=1}^k \frac{\norm{\mathbf{e}_j}^2}{\norm{\mathbf{e}_{j-1}}^2}.
\]
By taking logarithms and using the inequality $\log(1 + x) \leq x$, which holds for all $x \ge -1$, we deduce that
\[
    \log \left( \frac{\norm{\mathbf{e}_k}^2}{\norm{\mathbf{e}_0}^2} \right)
    = \sum_{j=1}^k \log \left( 1 + \frac{\norm{\mathbf{e}_j}^2}{\norm{\mathbf{e}_{j-1}}^2} - 1 \right)
    \leq \sum_{j=1}^k \xi_j, \quad \text{ where } \xi_j =   \frac{\norm{\mathbf{e}_j}^2}{\norm{\mathbf{e}_{j-1}}^2} - 1.
\]
Now, consider the process
\[
    \tilde{Z}_k := \sum_{j=1}^k \left( \xi_j +(1-\rho) \right). 
\]
If $\E_{k-1}$ denotes the expectation conditional on $\mY_1, \ldots, \mY_{k-1}$, then 
\[
    \E_{k-1} \| \ve{e}_k \|^2 = \ve{e}_{k-1}^\top \E[\mY_k^\top \mY_k] \ve{e}_{k-1} \leq \|\E[\mY_k^\top \mY_k]\| \cdot \| \ve{e}_{k-1} \|^2 \le \rho  \| \ve{e}_{k-1} \|^2.
\]
This implies that $\E_{k-1}[\xi_k] \leq \rho - 1$, and hence
\[
    \E_{k-1} \tilde{Z}_k = \tilde{Z}_{k-1} + \E_{k-1}\left[ \xi_k + (1-\rho) \right] \leq \tilde{Z}_{k-1}.
\]
That is, $\tilde{Z}_k$ is a supermartingale, null at zero, with respect to the natural filtration.
Moreover, from the almost sure boundedness assumption,
\[
    \| \ve{e}_k \|^2 = \ve{e}_{k-1}^\top \mY_k^\top \mY_k \ve{e}_{k-1} \leq \alpha \| \ve{e}_{k-1} \|^2.
\]
This shows that the process $\tilde{Z}_k$ has bounded increments, since
\[
    \tilde{Z}_k - \tilde{Z}_{k-1} = \frac{\norm{\mathbf{e}_k}^2}{\norm{\mathbf{e}_{k-1}}^2} - 1 + (1-\rho) \in [-\rho, \alpha - \rho].
\]
By the Azuma-Hoeffding inequality for supermartingales with bounded differences (e.g.,~\cite[Corollary~2.1]{fan2012hoeffding}), this implies that for all $\lambda \geq 0$,
\begin{align*}
    \PP\left[ \sup_{0 \leq t \leq k } \log \left( \frac{\norm{\mathbf{e}_t}^2}{\norm{\mathbf{e}_0}^2} \right) + t \cdot (1-\rho) \geq \lambda \right]
    &\leq \PP\left[ \sup_{0 \leq t \leq k} \sum_{j=1}^t \xi_j + t \cdot (1-\rho) \geq \lambda \right] \\
    &= \PP\left[ \sup_{0 \leq t \leq k} \tilde{Z}_t \geq \lambda\right] \leq \exp\left( \frac{- \lambda^2}{2k \alpha^2} \right) .
\end{align*}
Choosing $\lambda = \alpha \sqrt{2k \log(\epsilon^{-1})}$ in particular, we deduce that with probability at least $1 - \epsilon$, we have for all $0 \leq t \leq k$ simultaneously,
\[
    \log \left( \frac{\norm{\mathbf{e}_t}^2}{\norm{\mathbf{e}_0}^2} \right) \leq -t \cdot (1-\rho) + \alpha \cdot \sqrt{2k \log(\epsilon^{-1})} .
\]
Rearranging leads to the claimed result.
\end{proof}

\section{Nonlinear Methods}\label{sec:nonlinear}
As we have seen above, the available convergence guarantees for many iterative methods are typically of the form $\mathbb{E}[d(\ve{x}_k,S)^2] \leq r^k d(\ve{x}_0,S)^2$, where $d$ measures some distance to the solution set.  However, some lack the linear structure that enable the moment and variance bounds provided above.  In this section, we bound the concentration and variance of the error of randomized iterative methods whose errors in sequential iterations do not necessarily obey a linear relationship.  As described in Section~\ref{sec:intro}, the variant of randomized Kaczmarz (RK) for solving consistent systems of linear \emph{inequalities} falls into this category.

\subsection{Bounds on the variance and concentration of error} \label{subsec:nonlinear_general}
We will prove Theorem~\ref{thm:nonlinear_main}, which provides a bound on the variance of the squared error, and can be combined with Chebyshev's inequality to yield a bound on the concentration.

\begin{proof}[Proof of Theorem~\ref{thm:nonlinear_main}]
To begin, we observe that:
\begin{align*}
    \Var(d(\ve{x}_k, S)^2) 
    &= \E[d(\ve{x}_k, S)^4] - (\E[d(\ve{x}_k, S)^2])^2 \\
    &\leq \E[d(\ve{x}_k, S)^4]
    = \E[d(\ve{x}_k, S)^2\cdot d(\ve{x}_k, S)^2].
\end{align*}
Then, we invoke the bound $d(\ve{x}_k,S)\leq D$ to bound
\begin{align*}
    \Var(d(\ve{x}_k, S)^2)
    \leq D^2 \E[d(\ve{x}_k, S)^2]
    \leq D^2 r^k d(\ve{x}_0,S)^2.
\end{align*}
This completes the proof.
\end{proof}

\begin{remark}
The bound on the variance immediately implies the following concentration result by applying Chebyshev's inequality:
\[
    \PP\left( \left|\|\ve{e}_k\|^{2} - \E\|\ve{e}_k\|^2\right| \geq t \right) \leq \frac{D^2 r^k d(\vx_0,S)^2}{t^2}.
\]
In particular, this implies that for any $\epsilon \in (0, 1)$, we have
\begin{equation}
    \PP\left(\left| d(\ve{x}_k, S)^2 - \E [d(\ve{x}_k, S)^2] \right| \geq 2 D \sqrt{\frac{r^k}{\epsilon}} d(\ve{x}_0, S) \right) \le \epsilon.
\end{equation}
Hence, with probability at least $1 - \epsilon$, the squared distance to $S$ lies in the interval $\E[d(\ve{x}_k, S)^2] \pm 2 D \sqrt{r^k \epsilon^{-1}} d(\ve{x}_0, S)$.
\end{remark}

Next, we specify the results obtained by applying Theorem~\ref{thm:nonlinear_main} to a commonly studied randomized linear iterative method for linear feasibility problems. 

\subsubsection{Randomized Kaczmarz method for linear feasibility}\label{subsec:RKLI}
As a generalization of the RK methods for linear equations, Leventhal and Lewis~\cite{LL10:Randomized-Methods} proposed a randomized algorithm for solving linear feasibility problems of the form
\begin{equation}\label{eq:matrixIeq}
    \begin{cases}
    \va_i^\top\vx\le b_i & (i \in I_\le) \\
    \va_i^\top\vx= b_i & (i \in I_=),
    \end{cases}
\end{equation}
where $\va_i\in \mathbb R^n$ for each $i$, and the disjoint index sets $I_\le$ and $I_=$ partition the set $\{1,2,\cdots,m\}$.
At each iteration $j,$ the algorithm randomly samples an index $i_j\in I_\le\cup I_=$, and if $i_j\in I_=$ or if $i_j\in I_\le$ and $\va_{i_j}^\top\vx_{j-1}> b_{i_j}$, projects the current iterate $\vx_{j-1}$ onto the hyperplane $\{\vx:\va_{i_j}^\top\vx= b_{i_j} \}$.  This update may be written as 
\[
    \ve{x}_k = \ve{x}_{k-1} - \frac{(\ve{a}_{i_k}^\top \ve{x}_{k-1} - b_{i_k})^+}{\|\ve{a}_{i_k}\|^2} \ve{a}_{i_k},
\]
where $z^+ = \max\{ z, 0 \}$.
Note that since the update defined by a sampled index $i_j \in I_\le$ depends upon the position of the current iterate $\vx_{j-1}$, this method does not satisfy the linear relationship, $\ve{e}_j = \mY_j \ve{e}_{j-1}$.
In particular, the update matrix $\mY_j$ cannot be sampled from a set of fixed matrices, but must depend upon $\ve{e}_{j-1}$.  Thus, the results from Section~\ref{sec:linear} do not immediately apply to this benignly nonlinear method. 

It was shown in~\cite{LL10:Randomized-Methods} that this algorithm converges at least linearly in expectation, with the guarantee
\[
    \mathbb{E}[d(\vx_k,S)^2] \le \left(1-\frac{1}{L^2\|\mA\|_F^2}\right)^k d(\vx_{0}, S)^2 ,
\]
if the rows are sampled with probability $\|\ve{a}_{i_k}\|^2/\|\mathbf{A}\|_F^2$, where $\mA$ is the $m \times n$ matrix whose $i$th row is $\va_i^\top$, $S$ is the feasible region defined by (\ref{eq:matrixIeq}), $d(\vx,S) := \min_{\vy \in S} \|\vx - \vy\|$ denotes the Euclidean distance of a point $\vx$ to set $S$, and $L$ is the \emph{Hoffman constant} for the system~\eqref{eq:matrixIeq} (see~\cite{hoffman1952}).

Now, noting that this method satisfies $d(\vx_k,S) \le D := d(\vx_0,S)$ for all $k \ge 0$, we may use Theorem~\ref{thm:nonlinear_main}~\eqref{eq:nonlinear_variance} to bound the variance of the squared distance to the feasible set by
\begin{equation}
    \Var(d(\ve{x}_k, S)^2) \leq \left(1-\frac{1}{L^2\|\mA\|_F^2}\right)^k d(\ve{x}_0, S)^4. \label{eq:RKI_variance} 
\end{equation}

We note that the Hoffman constants are difficult to calculate or bound in general~\cite{pena2024easily}, so we do not include any experiments evaluating this bound.

\section{Conclusions}

In this work, we establish upper bounds on the variance and concentration of the error of general classes of randomized iterative methods. While most previous analysis primarily focused on convergence in expectation, our results illustrate how the error can deviate above (and around) the expected error. For linear iterative methods like the randomized Kaczmarz and randomized Gauss--Seidel methods, we derived higher-order moment bounds using tensor-based analysis, extended these to bounds on the variance and concentration via Chebyshev's inequality, and provided some additional martingale-based high-probability results that can simultaneously bound entire random trajectory of an algorithm. We also extended our analysis to nonlinear iterative methods, such as the randomized Kaczmarz method for solving systems of linear inequalities, demonstrating similar bounds under mild assumptions.

These theoretical contributions are supported by comprehensive numerical experiments, which illustrate the validity and usefulness of our bounds across a range of problem types, including synthetically generated matrices with varying spectral gaps and structures. Our results offer not only improved understanding of the near-worst-case behavior of stochastic iterative methods, but also practical tools for designing and evaluating algorithms with quantifiable probabilistic guarantees.

Future work includes a closer analysis of the tensor-based parameter $\mu$ (and more generally the $\mu_p$ parameters appearing in the higher-order moment bounds), and better understanding its relationship to the matrix $\mA$.
Also, in some adaptive versions of RK, such as the corruption-robust QuantileRK algorithm~\cite{HNRS20,steinerberger2021quantile}, the error does not satisfy the linear relationship $\ve{e}_k = \mY_k \ve{e}_{k-1}$ in every iteration, and yet expectation bounds of the type~\eqref{eq:RKrate} are available. Extending the concentration analysis to such methods is especially interesting due to the inherent non-monotonicity of the error.
Further, there has been significant interest in partially greedy row selection strategies~\cite{DLHN16SKM,bai2018greedy}; extending our results to these methods where the $\mY_k$ samples are not independent would require careful bounds on the conditional expectation of these random variables as described in Remark~\ref{remark:not_iid}.

\section*{Acknowledgements}
TA, MC, and JH were partially supported by NSF DMS \#2211318 and JH was partially supported by NSF CAREER \#2440040. ER and JL were partially supported by NSF DMS \#2309685.

\bibliographystyle{plain}
\bibliography{bib}

\end{document}